\newtheorem{thm}{Theorem}
\newtheorem{lem}[thm]{Lemma}
\newtheorem{prop}[thm]{Proposition}
\theoremstyle{definition}
\newtheorem{defin}[thm]{Definition}
\newcommand{\C}{\mathbb{C}}
\newcommand{\R}{\mathbb{R}}
\newcommand{\Span}{\mathop{\mathrm{span}}\nolimits}
\begin{document}

\begin{center}
\Large{\textbf{
Nonlinear instability of linearly unstable standing waves for 
nonlinear Schr\"odinger equations}}
\end{center}

\vspace{5mm}

\begin{center}
{\large Vladimir Georgiev} $^{1}$ and {\large Masahito Ohta} $^{2}$
\end{center}
\begin{center}
$^{1}$ Dipartimento di Matematica, Universit\`a degli Studi di Pisa, \\
Largo Bruno Pontecorvo 5, I-56127 Pisa, Italy \\
georgiev@dm.unipi.it \\
$^{2}$ Department of Mathematics, Saitama University, \\
Saitama 338-8570, Japan \\
mohta@mail.saitama-u.ac.jp
\end{center}

\begin{abstract}
We study the instability of standing waves for
nonlinear Schr\"odinger equations.
Under a general assumption on nonlinearity,
we prove that linear instability implies orbital instability
in any dimension.
For that purpose, we establish a Strichartz type estimate
for the propagator generated by the linearized operator
around standing wave.
\end{abstract}

\section{Introduction}\label{sect:intro}

In this paper we study the instability of standing waves
for nonlinear Schr\"odinger equations
\begin{equation}\label{nls}
i\partial_tu+\Delta u+g(|u|^2)u=0, \quad (t,x)\in \R\times \R^N,
\end{equation}
where $u$ is a complex-valued function of $(t,x)$,
and $g$ is a real-valued function.
A typical example of nonlinearity is
$g(|u|^2)u=|u|^{p-1}u$ with $1<p<2^*-1$,
where $2^*=2N/(N-2)$ if $N\ge 3$ and $2^*=\infty$ if $N=1,2$.
Precise assumptions on the nonlinearity will be made later.
By a standing wave we mean a solution of \eqref{nls}
of the form $u(t,x)=e^{i\omega t}\varphi(x)$,
where $\omega\in \R$ and $\varphi\in H^1(\R^N)\setminus\{0\}$
is a solution of the stationary problem
\begin{equation}\label{sp}
-\Delta \varphi+\omega \varphi-g(|\varphi|^2)\varphi=0,\quad x\in \R^N.
\end{equation}

For the special case $g(|u|^2)u=|u|^{p-1}u$ with $1<p<2^*-1$,
the following results are well-known.
For each $\omega>0$,
the stationary problem \eqref{sp} has a unique positive radial solution
in $H^1(\R^N)$ (see \cite{strauss,BL} for existence,
and \cite{kwo} for uniqueness).
We call it ground state.
When $N\ge 2$, other than the ground state,
there exist infinitely many solutions of \eqref{sp} in $H^1(\R^N)$.
We call them excited states.
For the ground state $\varphi$ of \eqref{sp} with $\omega>0$,
the standing wave $e^{i\omega t}\varphi$ is
orbitally stable if $1<p<1+4/N$,
while it is orbitally unstable if $1+4/N\le p<2^*-1$
(see \cite{BC,CL,wei1}).
For more general nonlinearity,
Shatah and Strauss \cite{SS1} gave
a general condition for orbital instability
of ground state-standing waves for \eqref{nls}
constructing suitable Lyapunov functionals
(see also \cite{GSS1} and \cite{gon, mae, oh, oht}).
We remark that these results are mostly limited to
ground states and are not applicable to excited states.
Here, we recall the definition of
orbital stability and instability of standing waves.

\begin{defin}
We say that the standing wave $e^{i\omega t}\varphi$
is \emph{orbitally stable}
if for any $\varepsilon>0$ there exists $\delta>0$ such that
if $u_0\in H^1(\R^N)$ and $\|u_0-\varphi\|_{H^1}<\delta$,
then the solution $u(t)$ of \eqref{nls} with $u(0)=u_0$
exists globally and satisfies
$$\inf_{(\theta,y)\in \R\times \R^N}
\|u(t)-e^{i\theta}\varphi(\cdot+y)\|_{H^1}<\varepsilon$$
for all $t\ge 0$.
Otherwise, $e^{i\omega t}\varphi$ is called \emph{orbitally unstable}
or \emph{nonlinearly unstable}.
\end{defin}

While, $e^{i\omega t}\varphi$ is said to be \emph{linearly unstable}
if the linearized operator $A=JH$ around the standing wave
has an eigenvalue with positive real part
(for the definition of $J$ and $H$,
see \eqref{nlsR} and \eqref{def:H} below).
The linear instability of standing waves for \eqref{nls} was studied by
Jones \cite{jon} and Grillakis \cite{gri1, gri2}
(see also \cite{GSS2, miz1,miz3}).
In particular,
for the case $g(|u|^2)u=|u|^{p-1}u$ with $1+4/N<p<2^*-1$,
it is proved in \cite{gri1} that
for any radially symmetric, real-valued solution $\varphi$
of \eqref{sp} with $\omega>0$,
$e^{i\omega t}\varphi$ is linearly unstable.
The result in \cite{gri1} guarantees
that among radially symmetric solutions,
one can find oscillating solutions (i.e. solutions changing the sign)
and these solutions shall generate excited states $e^{i\omega t}\varphi$.
On the other hand, Mizumachi \cite{miz1,miz3}
considered complex-valued solutions of \eqref{sp}
in $\R^2$ of the form $\varphi_m(x)=e^{im\theta}\phi(r)$,
where $m$ is a positive integer,
and $r$, $\theta$ are the polar coordinates in $\R^2$
(see \cite{IW,lio} for existence of $\varphi_m$).
It is proved that if $p>3$ then for any $m$,
$e^{i\omega t}\varphi_m$ is linearly unstable (\cite{miz1}),
and that if $1<p<3$ then for sufficiently large $m$,
$e^{i\omega t}\varphi_m$ is linearly unstable (\cite{miz3}).

However, it is a highly nontrivial problem whether linear instability 
implies orbital instability for \eqref{nls}, 
especially in higher dimensional case (see \cite{dB, DiMG, miz2, SS2}).
Even in two dimensional case, some technical difficulties arise
from the estimates of nonlinear terms
(see Lemma 13 of \cite{CCO1}).
For the case $N\le 3$,
a satisfactory answer for this problem was given by
Colin, Colin and Ohta \cite{CCO2}.
The main idea in \cite{CCO2} is to employ time derivative
in the estimates of nonlinear terms
without using space derivatives directly,
and to apply the $H^2$-regularity of $H^1$-solutions for \eqref{nls}.
However, the proof of \cite{CCO2} is based on the $L^2$-estimate
on the propagator $e^{tA}$ generated by the linearized operator $A$,
and the restriction $N\le 3$ comes from the embedding
$H^2(\R^N)\hookrightarrow L^{\infty}(\R^N)$.

The main goal of this work is to show that linear instability
implies orbital instability for \eqref{nls} in any dimension $N\ge 1$
(see Theorem \ref{mainthm} below).
In particular, for the case $g(|u|^2)u=|u|^{p-1}u$ with $1+4/N<p<2^*-1$,
it follows from the linear instability result of \cite{gri1}
and our Theorem \ref{mainthm} that
for any radially symmetric, real-valued solution $\varphi$
of \eqref{sp} with $\omega>0$, $e^{i\omega t}\varphi$ is 
orbitally unstable in any dimension. 

Our approach is based on appropriate Strichartz type estimate
for the propagator $e^{tA}$
and gives the possibilities for further generalization.
We have chosen the model of the nonlinear Schr\"odinger equation \eqref{nls}  for simplicity, but even in this case
one needs to apply spectral mapping result $\sigma (e^A)=e^{\sigma (A)}$
discussed in the work of
Gesztesy, Jones, Latushkin and Stanislavova \cite{GJLS}.
If one considers complex-valued solutions of \eqref{sp},
then the assertion
$$ \text{linear instability}   \Longrightarrow
\text{orbital  instability}$$
depends on the possible generalization
of the property $\sigma (e^A)=e^{\sigma (A)}$ for 
the linearized operator $A$ around complex-valued excited states.
Since our goal is to give general argument working for complex-valued
excited states as well,
we have to make suitable generalization of the result in \cite{GJLS}
(see Section \ref{sect:SMT}).

Here, we give an outline of the paper more precisely.
In what follows, we often identify $z\in \C$ with
${}^{t}(\Re z,\Im z)\in \R^2$,
and write $z={}^{t}(\Re z,\Im z)$.
We define $f(z)=-g(|z|^2)z$ for $z\in \R^2$.
Then, \eqref{nls} is rewritten as
\begin{equation}\label{nlsR}
\partial_tu=J(-\Delta u+f(u)), \quad
J=\left[\begin{array}{cc}
0 & 1 \\
-1 & 0
\end{array}\right], \quad
u=\left[\begin{array}{c}
\Re u \\
\Im u
\end{array}\right].
\end{equation}
We assume that $f\in C^1(\R^2,\R^2)$,
and denote the derivative of $f$ at $z\in \R^2$
by $Df(z)$, which is a $2\times 2$-real symmetric matrix
and is given by
\begin{equation}\label{DF}
Df(z)=-\left[\begin{array}{cc}
2g'(|z|^2)(\Re z)^2+g(|z|^2) & 2g'(|z|^2)\Re z \Im z \\
2g'(|z|^2)\Re z \Im z  & 2g'(|z|^2)(\Im z)^2+g(|z|^2)
\end{array}\right].
\end{equation}
For nonlinearity, we assume the following.

\vspace{2mm}
\noindent \textbf{(H1)} \hspace{1mm}
$g$ is a real-valued continuous function on $[0,\infty)$,
and $f(z)=-g(|z|^2)z$ is decomposed
as $f=f_1+f_2$ with $f_j\in C^1(\R^2,\R^2)$,
$f_j(0)=0$, $Df_j(0)=O$, $j=1,2$,
and there exist constants $C$ and $1<p_j<2^*-1$ such that
\begin{equation}\label{ass:f}
|Df_j(z_1)-Df_j(z_2)|
\le C \left\{\begin{array}{ll}
|z_1-z_2|^{p_j-1} &
\hspace{2mm} \mbox{if} \hspace{3mm}
1<p_j\le 2 \\
(|z_1|^{p_j-2}+|z_2|^{p_j-2})|z_1-z_2| &
\hspace{2mm} \mbox{if} \hspace{3mm}
p_j>2
\end{array}\right.
\end{equation}
for all $z_1$, $z_2\in \R^2$.
\vspace{2mm}

Remark that the typical example $f(z)=-|z|^{p-1}z$
satisfies (H1) for $1<p<2^*-1$
(see Lemma 2.4 of \cite{GV}).
Moreover, the Cauchy problem for \eqref{nls}
is locally well-posed in $H^1(\R^N)$
(see \cite{kat} and \cite[Chapter 4]{caz}).

For a solution of \eqref{sp}, we assume the following.

\vspace{2mm}
\noindent \textbf{(H2)} \hspace{1mm}
$\omega>0$ is a constant and $\varphi\in H^1(\R^N)$ is
a complex-valued nontrivial solution of \eqref{sp}.
\vspace{2mm}

For the existence of solutions of \eqref{sp},
see, e.g., \cite{BL,IW,lio,strauss}.
By the elliptic regularity theory,
we see that $\varphi\in H^2(\R^N)\cap C^2(\R^N)$
and $\varphi(x)$ decays to $0$ exponentially as $|x|\to \infty$.
Remark that we consider not only real-valued solutions of \eqref{sp}
but also complex-valued solutions, and that by \eqref{DF},
$Df(\varphi)$ is a diagonal matrix if $\varphi$ is real-valued,
but not in general.

By a change of variables $u(t)=e^{i\omega t}\left(\varphi+v(t)\right)$
in \eqref{nls} or \eqref{nlsR}, we have
\begin{equation}\label{eq:v}
\partial_tv=Av+h(v),
\end{equation}
where $v={}^{t}(\Re v,\Im v)$,
$A=JH$, $h(v)=
J[f(\varphi+v)-f(\varphi)-Df(\varphi)v]$, and
\begin{equation}\label{def:H}
H=H_0+Df(\varphi), \quad
H_0=\left[\begin{array}{cc}
-\Delta+\omega & 0 \\
0 & -\Delta+\omega
\end{array}\right].
\end{equation}

For the linearized operator $A=JH$,
we assume the following.

\vspace{2mm}
\noindent \textbf{(H3)} \hspace{1mm}
The operator $A$ has an eigenvalue $\lambda_0$
such that $\Re \lambda_0>0$.
\vspace{2mm}

As stated above,
sufficient conditions for (H3) are studied by
\cite{gri1, gri2, GSS2, jon, miz1, miz3}.
See also \cite{CGNT, cuc, CPV, wei2} for spectral properties of $A$.
We now state the main result of this paper.

\begin{thm}\label{mainthm}
Assume $({\rm H1})$--$({\rm H3})$.
Then, the standing wave $e^{i\omega t}\varphi$
of \eqref{nls} is orbitally unstable.
\end{thm}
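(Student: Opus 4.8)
The plan is to argue by contradiction: assume $e^{i\omega t}\varphi$ is orbitally stable, and use the eigenvalue $\lambda_0$ with $\Re\lambda_0>0$ from (H3) to construct a solution that escapes the orbit, contradicting stability. The standard strategy (going back to Grillakis, Shatah--Strauss, and refined in \cite{CCO2}) is to pick an eigenfunction $\xi_0$ of $A$ associated with $\lambda_0$, and for small $\delta>0$ consider the solution $v^\delta(t)$ of \eqref{eq:v} with initial data $v^\delta(0)=\delta\,\Re(\xi_0 e^{i\gamma})$ (a real representative of the eigendirection). Writing the Duhamel formula $v^\delta(t)=e^{tA}v^\delta(0)+\int_0^t e^{(t-s)A}h(v^\delta(s))\,ds$, one wants to show that the linear part grows like $\delta e^{(\Re\lambda_0) t}$ while the nonlinear remainder stays of order $o(\delta e^{(\Re\lambda_0)t})$ up to the exit time, so that $\|v^\delta(T_\delta)\|$ reaches a fixed size $\varepsilon_0$ independent of $\delta$, with $T_\delta\sim (\Re\lambda_0)^{-1}\log(\varepsilon_0/\delta)\to\infty$.

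Carrying this out requires three ingredients. First, a growth/decay description of the semigroup $e^{tA}$: by the spectral mapping property $\sigma(e^{A})=e^{\sigma(A)}$ established in Section \ref{sect:SMT} (the generalization of \cite{GJLS}), the spectral radius type bound gives, for any $\mu$ with $\Re\lambda_0<\mu$ not an element of $\Re\sigma(A)$ in the relevant range, an estimate $\|e^{tA}\|\le Ce^{\mu t}$ on a suitable space together with the lower bound $\|e^{tA}P_0 w\|\ge c\,e^{(\Re\lambda_0)t}|w|$ on the (finite-dimensional, $A$-invariant) spectral subspace where the unstable eigenvalue lives; I would choose $\mu$ close enough to $\Re\lambda_0$ that $p_j\Re\lambda_0>\mu$ fails — more precisely I need the nonlinear exponents to beat the linear rate, so I would set $\mu\in(\Re\lambda_0,\,(1+\nu)\Re\lambda_0)$ for a small $\nu>0$ tied to $\min_j(p_j-1)$. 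Second, the Strichartz type estimate for $e^{tA}$ advertised in the abstract: this is what replaces the $H^2\hookrightarrow L^\infty$ argument of \cite{CCO2} and removes the dimension restriction, allowing the Duhamel integral of $h(v)$ to be estimated in mixed space-time norms $L^q_t L^r_x$ rather than in $L^\infty_t(H^2)$. Third, the nonlinear estimate: from (H1), $h(v)=J[f(\varphi+v)-f(\varphi)-Df(\varphi)v]$ satisfies, via \eqref{ass:f} and the Taylor remainder, a bound of the form $\|h(v)\|_{N(I)}\lesssim (\|v\|_{S(I)}^{p_1}+\|v\|_{S(I)}^{p_2})$ in dual Strichartz norms, possibly after trading a space derivative for a time derivative as in \cite{CCO2} and invoking the $H^2$-persistence of regularity for \eqref{nls}; the key point is the superlinear power $p_j>1$.

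The bootstrap then runs as follows: on the maximal interval $[0,T_\delta]$ where $\|v^\delta(t)\|\le \varepsilon_0$, combine the Strichartz estimate with the semigroup growth bound to get $\|v^\delta\|_{S([0,t])}\le C\delta e^{\mu t}+C(\|v^\delta\|_{S([0,t])}^{p_1}+\|v^\delta\|_{S([0,t])}^{p_2})$; since $\mu>\Re\lambda_0$ is only slightly larger and the nonlinear terms carry the higher power, a continuity argument shows $\|v^\delta\|_{S([0,t])}\le 2C\delta e^{\mu t}$ for as long as $\delta e^{\mu t}$ is below a threshold, i.e. essentially up to $T_\delta$. On the other hand, projecting Duhamel's formula onto the unstable subspace and using the lower bound $\|e^{tA}P_0 v^\delta(0)\|\ge c\delta e^{(\Re\lambda_0)t}$ against the nonlinear contribution $\le C\delta^{\,p_*}e^{p_*\mu t}$ (with $p_*=\min_j p_j>1$ and the exponents arranged so $p_*\mu$ only marginally exceeds $\Re\lambda_0$ but the $\delta^{p_*}$ prefactor wins for small $\delta$), one gets $\|v^\delta(t)\|\ge (c/2)\delta e^{(\Re\lambda_0)t}$ until the exit time; hence $\|v^\delta(T_\delta)\|$ is bounded below by a fixed constant, which can be arranged to exceed any prescribed $\varepsilon$ after accounting for the modulation infimum over $(\theta,y)$. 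This contradicts orbital stability, proving the theorem. The main obstacle, and the technical heart of the argument, is establishing the Strichartz type estimate for the non-self-adjoint, non-skew-adjoint propagator $e^{tA}$ — $A=JH$ is a relatively compact perturbation of $JH_0$ whose free propagator is essentially a Schrödinger group, but one must control the contribution of the point spectrum (the unstable and any neutral modes) separately via the spectral projections and verify that on the complementary subspace the perturbed propagator still satisfies Strichartz bounds uniformly; managing the interaction of this spectral decomposition with the mixed-norm nonlinear estimates (and the derivative-for-time trade) is where the real work lies.
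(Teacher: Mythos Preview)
Your overall strategy matches the paper's: contradiction, eigenfunction initial data $\delta\,\Re\chi$, Duhamel plus Strichartz for $e^{tA}$, superlinear remainder, bootstrap up to an exit time $T_\delta\sim (\Re\lambda^*)^{-1}\log(\varepsilon_0/\delta)$, then a lower bound contradicting stability. Two technical points, however, are executed differently from what you anticipate, and in both cases the paper's route is simpler.

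\emph{Strichartz for $e^{tA}$.} You expect the main obstacle to be a spectral decomposition of $e^{tA}$ into unstable/neutral/stable pieces, with Strichartz proved only on the complementary subspace. The paper avoids this entirely. It introduces the exponentially weighted norm $\|f\|_{L^{q,\lambda}_TY}:=e^{\lambda T}\|e^{-\lambda\cdot}f\|_{L^q_TY}$; writing $v(t)=e^{-\lambda t}u(t)$ turns $\partial_t u=Au+f$ into $\partial_t v=A_0v+(V-\lambda)v+e^{-\lambda t}f$, so the \emph{free} Strichartz estimate for $e^{tA_0}$ applies directly, and the extra term $(V-\lambda)v$ is absorbed in $L^1_TL^2$ using only the crude growth bound $\|e^{tA}\|\le Ce^{\nu t}$ (itself obtained from the spectral mapping theorem by choosing $\nu$ with $\Re\lambda^*<\nu<(1+\alpha)\Re\lambda^*$, $\alpha=\min\{1,r_1-2,r_2-2\}$). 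No spectral projections enter the Strichartz step at all.

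\emph{Lower bound and modulation.} Rather than projecting onto an unstable spectral subspace and invoking a lower bound $\|e^{tA}P_0w\|\ge c e^{(\Re\lambda^*)t}|w|$, the paper uses the explicit representation $v_\delta(t)=\delta\,\Re(e^{\lambda^* t}\chi)+\Gamma[h(v_\delta)](t)$, bounds the Duhamel term by $C\varepsilon_0^{1+\alpha}$ via the bootstrap, and then takes the $L^2$ inner product with $(\Re\chi)^{\perp}$, the component of $\Re\chi$ orthogonal to $\Span\{i\varphi,\nabla\varphi\}$. Since $e^{i\theta}\varphi(\cdot+y)-\varphi$ lies in that span to leading order, this pairing directly lower-bounds the orbital distance $\inf_{(\theta,y)}\|u_\delta(T_\delta)-e^{i\theta}\varphi(\cdot+y)\|_{L^2}$, handling the modulation in one stroke. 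Your spectral-projection route would require checking compatibility of $P_0$ with the symmetry action, which is an unnecessary complication here. (A small arithmetic point: the choice $T_\delta$ is also arranged so that $\Im\lambda^* T_\delta\in 2\pi\mathbb{Z}$, making $\Re(e^{\lambda^* T_\delta}\chi)=e^{\Re\lambda^* T_\delta}\Re\chi$ exactly.)
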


The rest of the paper is organized as follows.
In Section \ref{sect:str}, assuming that the propagator $e^{tA}$
satisfies an exponential growth condition \eqref{growth},
we introduce a suitable norm \eqref{norm}
and establish a Strichartz type estimate for $e^{tA}$.
In Section \ref{sect:instability}, we prove Theorem \ref{mainthm}.
In the proof, we apply the Strichartz type estimate for $e^{tA}$
proved in Section \ref{sect:str},
and we employ time derivative instead of space derivatives
in the estimates of nonlinear terms as in \cite{CCO2}.
Finally, in Section \ref{sect:SMT},
we give some remarks on the spectral mapping theorem for $e^A$
due to Gesztesy, Jones, Latushkin and Stanislavova \cite{GJLS}.

\section{Strichartz estimates}\label{sect:str}

Let $V_{jk}\in L^{\infty}(\R^N,\R)$ for $j,k=1,2$,
and we consider linear operators
\begin{equation}\label{AV}
A=A_0+V, \quad A_0=JH_0, \quad
V=\left[\begin{array}{cc}
V_{11} & V_{12} \\
V_{21} & V_{22}
\end{array}
\right]
\end{equation}
on $L^2(\R^N)\times L^2(\R^N)$
with domains $D(A_0)=D(A)=H^2(\R^N)\times H^2(\R^N)$,
where $J$ and $H_0$ are defined in \eqref{nlsR} and \eqref{def:H}.
Let $e^{tA_0}$ and $e^{tA}$ be the strongly continuous groups
on $L^2(\R^N)\times L^2(\R^N)$
generated by $A_0$ and $A$ respectively, and we define
\begin{align*}
\Gamma_0[f](t)=\int_{0}^{t}e^{(t-s)A_0}f(s)\,ds, \quad
\Gamma [f](t)=\int_{0}^{t}e^{(t-s)A}f(s)\,ds.
\end{align*}
Moreover, we denote $L^r:=L^r(\R^N)\times L^r(\R^N)$ and
$L_T^qY:=L^q((0,T),Y)$ for a Banach space $Y$.
Note that $u(t)=e^{tA}\psi+\Gamma [f](t)$ satisfies
\begin{equation}\label{eq:u}
\partial_tu=Au+f(t)=A_0u+Vu+f(t), \quad u(0)=\psi,
\end{equation}
and $u_0(t)=e^{tA_0}\psi+\Gamma_0[f](t)$ satisfies
\begin{equation}\label{eq:u0}
\partial_tu_0=A_0u_0+f(t)=Au_0+f(t)-Vu_0, \quad u_0(0)=\psi.
\end{equation}
We assume that there exist positive constants $C$ and $\nu$
such that
\begin{equation}\label{growth}
\|e^{tA}\|_{B(L^2)}\le Ce^{\nu t}
\end{equation}
for all $t\ge 0$.
For $\lambda>0$, we define functions
$e^{+}_{\lambda}$ and $e^{-}_{\lambda}$ by
$e^{\pm}_{\lambda}(t)=e^{\pm \lambda t}$ for $t\in \R$.
Moreover, we define
\begin{equation}\label{norm}
\|f\|_{L^{q,\lambda}_TY}
:=e^{\lambda T}\|e^{-}_{\lambda}f\|_{L_T^qY}.
\end{equation}
Note that $\|f\|_{L_T^qY}\le \|f\|_{L^{q,\lambda}_TY}
\le \|f\|_{L^{q,\mu}_TY}$
for $0<\lambda<\mu$ and $T>0$.
The H\"older conjugate of $q$ is denoted by $q'$.
For the definition of admissible pairs
and the standard Strichartz estimates for $e^{it\Delta}$,
see, e.g., \cite[Section 2.3]{caz}.

\begin{lem}\label{lem1}
Assume $V\in L^{\infty}(\R^N)$ and \eqref{growth}.
Let $0<\nu<\mu$ and let $(q,r)$ be any admissible pair.
Then, there exists a constant $C$
independent of $\psi$, $f$ and $T$ such that
$u(t)=e^{tA}\psi+\Gamma [f](t)$ satisfies
$$\|u(t)\|_{L^2}
\le C\left(e^{\nu t}\|\psi\|_{L^2}
+e^{\mu t}\|e^{-}_{\mu}f\|_{L^{q'}_TL^{r'}}\right)$$
for all $t\in [0,T]$.
\end{lem}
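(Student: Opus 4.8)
The plan is to compare the solution $u$ with the free solution $u_0(t)=e^{tA_0}\psi+\Gamma_0[f](t)$, treating the bounded potential $V$ as a perturbation whose contribution is controlled by the crude growth bound \eqref{growth} rather than by a contraction argument (the latter would only produce a constant depending on $T$). The starting point is a Duhamel identity relating the two propagators: by \eqref{eq:u0}, $u_0$ satisfies $\partial_tu_0=A_0u_0+f=Au_0+(f-Vu_0)$ with $u_0(0)=\psi$, so rewriting this in the $A$-picture gives
$$u_0(t)=e^{tA}\psi+\Gamma[f-Vu_0](t)=\big(e^{tA}\psi+\Gamma[f](t)\big)-\Gamma[Vu_0](t),$$
i.e.
$$u(t)=u_0(t)+\Gamma[Vu_0](t),$$
where the right-hand side now involves only the \emph{explicit} function $u_0$. (This is the resolvent expansion $\Gamma=\Gamma_0+\Gamma V\Gamma_0$ combined with $e^{tA}=e^{tA_0}+\Gamma[Ve^{\cdot A_0}\psi]$; by a density argument one may assume throughout that $f$ is smooth and compactly supported in time.)

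Next I would bound $u_0$. In the complex notation $e^{tA_0}=e^{-i\omega t}e^{it\Delta}$ acts componentwise, so the homogeneous and inhomogeneous Strichartz estimates for $e^{it\Delta}$ apply with constants independent of the length of the time interval; in particular, for the given admissible pair $(q,r)$,
$$\|u_0(t)\|_{L^2}\le C\big(\|\psi\|_{L^2}+\|f\|_{L^{q'}_tL^{r'}}\big)\le C\big(\|\psi\|_{L^2}+e^{\mu t}\|e^{-}_{\mu}f\|_{L^{q'}_TL^{r'}}\big),\qquad 0\le t\le T,$$
where the last step uses $\|f\|_{L^{q'}_tL^{r'}}\le e^{\mu t}\|e^{-}_{\mu}f\|_{L^{q'}_tL^{r'}}\le e^{\mu t}\|e^{-}_{\mu}f\|_{L^{q'}_TL^{r'}}$. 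This already has the desired form.

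For the perturbative term I would use \eqref{growth} and $V\in L^{\infty}$ directly:
$$\|\Gamma[Vu_0](t)\|_{L^2}\le\int_0^t\|e^{(t-s)A}\|_{B(L^2)}\,\|Vu_0(s)\|_{L^2}\,ds\le C\|V\|_{\infty}\int_0^t e^{\nu(t-s)}\|u_0(s)\|_{L^2}\,ds.$$
Inserting the bound for $\|u_0(s)\|_{L^2}$: the $\psi$-part is handled by $\int_0^t e^{\nu(t-s)}\,ds\le\nu^{-1}e^{\nu t}$, and for the $f$-part one uses $\|f\|_{L^{q'}_sL^{r'}}\le e^{\mu s}\|e^{-}_{\mu}f\|_{L^{q'}_TL^{r'}}$ together with $\int_0^t e^{\nu(t-s)}e^{\mu s}\,ds=(e^{\mu t}-e^{\nu t})/(\mu-\nu)\le(\mu-\nu)^{-1}e^{\mu t}$. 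This last computation is the one place where the hypothesis $\nu<\mu$ is essential. Adding the two estimates and using $u=u_0+\Gamma[Vu_0]$ yields the asserted bound for all $t\in[0,T]$.

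The only genuine obstacle is the exponential bookkeeping: one must arrange the weights so that the slow $L^2$-growth rate $\nu$ of $e^{tA}$ is not inflated to an uncontrolled $e^{CT}$ when the potential is reinserted into the Duhamel formula, and this is exactly what forces the inhomogeneity to be measured with the slightly faster rate $\mu$. The other ingredients — the $T$-uniform Strichartz estimates for $e^{tA_0}$ and the identity $u=u_0+\Gamma[Vu_0]$ — are standard.
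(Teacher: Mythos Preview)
Your proof is correct and matches the paper's argument essentially step for step: both derive the identity $u=u_0+\Gamma[Vu_0]$ (you via rewriting \eqref{eq:u0} in the $A$-picture, the paper via subtracting \eqref{eq:u0} from \eqref{eq:u}), then bound $u_0$ by the free Strichartz estimate and $\Gamma[Vu_0]$ by \eqref{growth}, using $\nu<\mu$ to evaluate $\int_0^t e^{\nu(t-s)}e^{\mu s}\,ds\le (\mu-\nu)^{-1}e^{\mu t}$. Your exponential bookkeeping is in fact slightly more explicit than the paper's.
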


\begin{proof}
Let $u_0(t)=e^{tA_0}\psi+\Gamma_0[f](t)$.
Then, by \eqref{eq:u} and \eqref{eq:u0}, we have
$$\partial_t(u-u_0)=A(u-u_0)+Vu_0, \quad (u-u_0)(0)=0,$$
so $u-u_0=\Gamma [Vu_0]$.
By the assumption \eqref{growth}, we have
\begin{align*}
\|u(t)-u_0(t)\|_{L^2}
&\le \int_{0}^{t}\|e^{(t-s)A}Vu_0(s)\|_{L^2}\,ds \\
&\le C\|V\|_{L^{\infty}}\int_{0}^{t}e^{\nu (t-s)}\|u_0(s)\|_{L^2}\,ds
\end{align*}
for all $t\in [0,T]$.
Here, by the standard Strichartz estimate for $e^{it\Delta}$, we have
$$\|u_0(t)\|_{L^2}\le C(\|\psi\|_{L^2}+\|f\|_{L^{q'}_tL^{r'}})
\le C(\|\psi\|_{L^2}+e^{\mu t}\|e^{-}_{\mu}f\|_{L^{q'}_TL^{r'}})$$
for all $t\in [0,T]$. Thus,
\begin{align*}
&\|u(t)\|_{L^2}
\le \|u_0(t)\|_{L^2}+\|u(t)-u_0(t)\|_{L^2} \\
&\le \|u_0(t)\|_{L^2}+C\int_{0}^{t}e^{\nu (t-s)}\|\psi\|_{L^2}\,ds
+Ce^{\nu t}\int_{0}^{t}e^{(\mu-\nu)s}
\|e^{-}_{\mu}f\|_{L^{q'}_TL^{r'}}\,ds \\
&\le C(e^{\nu t}\|\psi\|_{L^2}
+e^{\mu t}\|e^{-}_{\mu}f\|_{L^{q'}_TL^{r'}})
\end{align*}
for all $t\in [0,T]$. This completes the proof.
\end{proof}

\begin{prop}\label{prop2}
Assume $V\in L^{\infty}(\R^N)$ and \eqref{growth}.
Let $0<\lambda<\nu<\mu$,
and let $(q_1,r_1)$ and $(q_2,r_2)$ be any admissible pairs.
Then, there exists a constant $C$
independent of $\psi$, $f$ and $T$ such that
$u(t)=e^{tA}\psi+\Gamma [f](t)$ satisfies
$$\|u\|_{L^{q_1,\lambda}_TL^{r_1}}
\le C\left(e^{\nu T}\|\psi\|_{L^2}
+\|f\|_{L^{q_2',\mu}_TL^{r_2'}}\right).$$
\end{prop}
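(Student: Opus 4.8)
The plan is to derive the space-time estimate from the pointwise-in-time estimate of Lemma \ref{lem1} together with the standard Strichartz estimates for the free Schr\"odinger group $e^{it\Delta}$, handling the potential term $V$ via the comparison trick $u-u_0=\Gamma[Vu_0]$ already used in the lemma. First I would write $u=e^{tA}\psi+\Gamma[f]$ and split it as $u=u_0+(u-u_0)$ with $u_0=e^{tA_0}\psi+\Gamma_0[f]$, so that $u-u_0=\Gamma[Vu_0]$. For the $u_0$ part, the genuine Strichartz estimates (both the homogeneous estimate for $e^{it\Delta}\psi$ and the inhomogeneous estimate with the dual pair $(q_2',r_2')$) give
\begin{equation*}
\|u_0\|_{L^{q_1}_TL^{r_1}}\le C\bigl(\|\psi\|_{L^2}+\|f\|_{L^{q_2'}_TL^{r_2'}}\bigr),
\end{equation*}
and since $\|u_0\|_{L^{q_1}_tL^{r_1}}$ is nondecreasing in $t$ while $e^{-}_\lambda$ only makes the weighted norm smaller, one upgrades this to a bound on $\|u_0\|_{L^{q_1,\lambda}_TL^{r_1}}$ after absorbing the weight: $\|u_0\|_{L^{q_1,\lambda}_TL^{r_1}}=e^{\lambda T}\|e^{-}_\lambda u_0\|_{L^{q_1}_TL^{r_1}}\le e^{\lambda T}\|u_0\|_{L^{q_1}_TL^{r_1}}$ is not quite what we want since the right side should carry the $e^{\nu T}$ and the $\mu$-weighted norm of $f$; so instead I would estimate $\|e^{-}_\lambda u_0(t)\|_{L^{r_1}}$ pointwise in a window and sum, or more cleanly apply Strichartz on each interval and track the exponential weights, exactly as in the last display of the proof of Lemma \ref{lem1}.

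For the difference $u-u_0=\Gamma[Vu_0]$, I would use the inhomogeneous Strichartz estimate in the form that maps $L^{q'}_TL^{r'}$-data to $L^{q_1}_TL^{r_1}$, but the source term $Vu_0$ is only controlled in $L^\infty_T L^2$ via Lemma \ref{lem1}; the natural move is therefore to use the dual endpoint pair $(q',r')=(\infty,2)$... but $(\infty,2)$ is admissible, so the inhomogeneous Strichartz estimate gives
\begin{equation*}
\|\Gamma_0[Vu_0]\|_{L^{q_1}_TL^{r_1}}\le C\|Vu_0\|_{L^1_TL^2}\le C\|V\|_{L^\infty}\|u_0\|_{L^1_TL^2}.
\end{equation*}
However $\Gamma$ is built from $e^{tA}$, not $e^{tA_0}$, so I would again write $\Gamma[Vu_0]=\Gamma_0[Vu_0]+\Gamma[V\Gamma_0[Vu_0]]$ and iterate, or — better — apply Lemma \ref{lem1} directly: the function $w:=u-u_0$ solves $\partial_t w=Aw+Vu_0$ with $w(0)=0$, so $w=e^{tA}\cdot 0+\Gamma[Vu_0]$ and Lemma \ref{lem1} with the admissible pair $(\infty,2)$ gives $\|w(t)\|_{L^2}\le Ce^{\mu t}\|e^{-}_\mu (Vu_0)\|_{L^1_TL^2}$. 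This controls $w$ in $L^\infty$ in time but not in $L^{q_1}L^{r_1}$; to get the latter I run one more Strichartz step on $w=\Gamma[Vu_0]$, i.e.\ $w=\Gamma_0[Vu_0]+\Gamma[V w]$, bounding $\|\Gamma_0[Vu_0]\|_{L^{q_1}_TL^{r_1}}$ by Strichartz and $\|\Gamma[Vw]\|_{L^{q_1}_TL^{r_1}}$ by the same device with the already-obtained $L^\infty_TL^2$ bound on $w$, and then fold all the exponential factors $e^{\nu(t-s)}$, $e^{\mu s}$ into the claimed right-hand side by the elementary integral computation $e^{\nu t}\int_0^t e^{(\mu-\nu)s}\,ds\le Ce^{\mu t}$, exactly as at the end of the proof of Lemma \ref{lem1}, using $\lambda<\nu<\mu$.

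Collecting the pieces: the homogeneous term contributes $Ce^{\nu T}\|\psi\|_{L^2}$ (the $e^{\nu T}$ coming from the growth bound \eqref{growth} propagated through the Duhamel comparison), the inhomogeneous term contributes $C\|f\|_{L^{q_2',\mu}_TL^{r_2'}}$ (the $\mu$-weight absorbing the $e^{\nu(t-s)}$ kernel from \eqref{growth} against the $e^{\mu s}$ from the weighted norm of $f$, which converges precisely because $\nu<\mu$), and the weight $e^{\lambda T}$ built into $\|u\|_{L^{q_1,\lambda}_TL^{r_1}}$ is harmless since $\lambda<\nu$. The main obstacle I anticipate is purely bookkeeping: arranging the iterated Duhamel expansions so that at each stage the source term lies in a space to which a genuine Strichartz estimate applies — this forces the repeated use of the $(\infty,2)$ endpoint pair for the potential term — and then checking that every exponential weight that appears is dominated by $e^{\mu t}$ on $[0,T]$, which is where the strict inequalities $\lambda<\nu<\mu$ are used. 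No genuinely new idea beyond Lemma \ref{lem1} is needed; the proposition is its space-time upgrade.
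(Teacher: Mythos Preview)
Your approach is workable but takes a more roundabout path than the paper's. The paper's proof hinges on a single clean substitution that you never make: set $v(t)=e^{-\lambda t}u(t)$. Then the weighted norm $\|e^{-}_{\lambda}u\|_{L^{q_1}_TL^{r_1}}$ is just the unweighted $\|v\|_{L^{q_1}_TL^{r_1}}$, and $v$ solves
\[
\partial_t v = A_0 v + (V-\lambda)v + e^{-\lambda t}f(t),\qquad v(0)=\psi.
\]
One application of the standard Strichartz estimate gives
\[
\|v\|_{L^{q_1}_TL^{r_1}}\le C\bigl(\|\psi\|_{L^2}+\|(V-\lambda)v\|_{L^1_TL^2}+\|e^{-}_{\lambda}f\|_{L^{q_2'}_TL^{r_2'}}\bigr),
\]
and the middle term is bounded by $\int_0^T e^{-\lambda t}\|u(t)\|_{L^2}\,dt$, which Lemma~\ref{lem1} controls directly. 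No splitting $u=u_0+(u-u_0)$, no iteration, no windowing.

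Your route --- split off $u_0$, then peel off potential terms by repeated Duhamel --- can be pushed through, but you are vague exactly where the work is: converting the unweighted Strichartz bound on $u_0$ into a weighted one. Your remark that one should ``apply Strichartz on each interval and track the exponential weights'' is correct in spirit (a unit-interval decomposition of $[0,T]$ together with Lemma~\ref{lem1} at the left endpoints does the job), but this is an extra page of bookkeeping that the substitution $v=e^{-\lambda t}u$ avoids entirely. Also note a slip: your identity $w=\Gamma_0[Vu_0]+\Gamma[Vw]$ is incorrect; from $\partial_t(u-u_0)=A_0(u-u_0)+Vu$ one gets $u-u_0=\Gamma_0[Vu]=\Gamma_0[Vu_0]+\Gamma_0[Vw]$, with $\Gamma_0$ (not $\Gamma$) in the second term. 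This actually simplifies your argument, since standard Strichartz then applies directly.
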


\begin{proof}
We put $v(t)=e^{-\lambda t}u(t)$.
Then, by \eqref{eq:u}, we have
$$\partial_tv=A_0v+(V-\lambda)v+e^{-\lambda t}f(t),
\quad v(0)=\psi.$$
By the standard Strichartz estimate for $e^{it\Delta}$, we have
$$\|e^{-}_{\lambda}u\|_{L^{q_1}_TL^{r_1}}=\|v\|_{L^{q_1}_TL^{r_1}}
\le C(\|\psi\|_{L^2}+\|(V-\lambda)v\|_{L^1_TL^2}
+\|e^{-}_{\lambda}f\|_{L^{q_2'}_TL^{r_2'}}).$$
Here, by Lemma \ref{lem1}, we have
\begin{align*}
\|(V-\lambda)v\|_{L^1_TL^2}
&\le (\|V\|_{L^{\infty}}+\lambda)\|v\|_{L^1_TL^2}
\le C\int_{0}^{T}e^{-\lambda t}\|u(t)\|_{L^2}\,dt \\
&\le C\int_{0}^{T}\{e^{(\nu-\lambda)t}\|\psi\|_{L^2}
+e^{(\mu-\lambda)t}\|e^{-}_{\mu}f\|_{L^{q_2'}_TL^{r_2'}}\}\,dt \\
&\le C\{e^{(\nu-\lambda)T}\|\psi\|_{L^2}
+e^{(\mu-\lambda)T}\|e^{-}_{\mu}f\|_{L^{q_2'}_TL^{r_2'}}\}.
\end{align*}
Moreover, since $\|e^{-}_{\lambda}f\|_{L^{q_2'}_TL^{r_2'}}
\le e^{(\mu-\lambda)T}\|e^{-}_{\mu}f\|_{L^{q_2'}_TL^{r_2'}}$,
we obtain the desired estimate.
\end{proof}

\section{Proof of Theorem \ref{mainthm}}\label{sect:instability}

In this section we assume (H1)--(H3),
and prove Theorem \ref{mainthm}.
For $j=1,2$, we put
$$h_j(v)=J[f_j(\varphi+v)-f_j(\varphi)-Df_j(\varphi)v],
\quad r_j=p_j+1,$$
and let $(q_j,r_j)$ be the corresponding admissible pair.
Note that $h(v)=h_1(v)+h_2(v)$ in \eqref{eq:v}.

\begin{lem}\label{lem:chi}
There exist $\lambda^*\in \C$ and
$\chi\in H^2(\R^N,\C)^2$
such that $\Re \lambda^*>0$,
$A\chi=\lambda^* \chi$ and $\|\chi\|_{L^2}=1$.
Moreover, $e^{tA}$ satisfies \eqref{growth}
for some $\nu$ with
$\Re \lambda^*<\nu<(1+\alpha)\Re \lambda^*$,
where
\begin{equation}\label{alp}
\alpha:=\min\{1,r_1-2,r_2-2\}.
\end{equation}
\end{lem}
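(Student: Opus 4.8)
The plan is to locate $\lambda^*$ essentially at the top of the spectrum of $A$ and then transfer spectral information about $A$ to a growth bound for $e^{tA}$ via the spectral mapping theorem.

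First I would record the structure of $\sigma(A)$. Since $\varphi\in H^2(\R^N)\cap C^2(\R^N)$ decays exponentially, the matrix potential $V:=Df(\varphi)=Df_1(\varphi)+Df_2(\varphi)$ satisfies $|Df_j(\varphi)|\le C|\varphi|^{p_j-1}$ by (H1), so $V\in L^\infty(\R^N)$ and is relatively compact with respect to $A_0=JH_0$; hence by Weyl's theorem $\sigma_{\mathrm{ess}}(A)=\sigma_{\mathrm{ess}}(A_0)$. A Fourier computation gives $\sigma(A_0)=\{i\xi:\xi\in\R,\ |\xi|\ge\omega\}\subset i\R$, so $\sigma_{\mathrm{ess}}(A)\subset i\R$ and every point of $\sigma(A)$ with nonzero real part is an isolated eigenvalue of finite multiplicity, lying in $D(A)=H^2(\R^N)^2$. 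Put $s:=\sup\{\Re\lambda:\lambda\in\sigma(A)\}$; by (H3) there is an eigenvalue with positive real part, so $s>0$, and since $\sup\{\Re\lambda:\lambda\in\sigma_{\mathrm{ess}}(A)\}=0<s$, the number $s$ is approached (and, by the standard finiteness of the unstable point spectrum of such Hamiltonian operators, attained) by eigenvalues of $A$.

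Next I would invoke the spectral mapping theorem for $e^{tA}$, namely the generalization of Gesztesy--Jones--Latushkin--Stanislavova \cite{GJLS} carried out in Section \ref{sect:SMT}, which gives $\sigma(e^{tA})\setminus\{0\}=e^{t\sigma(A)}$ for $t>0$. Consequently the spectral radius of $e^{tA}$ equals $\sup\{|e^{t\lambda}|:\lambda\in\sigma(A)\}=e^{ts}$, while Gelfand's formula identifies this spectral radius with $e^{t\omega_0(A)}$, where $\omega_0(A)=\lim_{t\to\infty}t^{-1}\log\|e^{tA}\|_{B(L^2)}$ is the growth bound (finite because $A=A_0+V$ with $A_0$ skew-adjoint and $V$ bounded). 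Hence $\omega_0(A)=s$.

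It then remains to choose $\lambda^*$. Since $1<p_j<2^*-1$ we have $r_j=p_j+1\in(2,2^*)$, so $\alpha=\min\{1,r_1-2,r_2-2\}\in(0,1]$ and $s/(1+\alpha)<s$; by the previous step there is an eigenvalue $\lambda^*$ of $A$ with $\Re\lambda^*>s/(1+\alpha)$ (one may take an eigenvalue of maximal real part). Let $\chi\in D(A)=H^2(\R^N,\C)^2$ be an associated eigenfunction, normalized so that $\|\chi\|_{L^2}=1$; then $\Re\lambda^*>0$ and $A\chi=\lambda^*\chi$. Moreover $(1+\alpha)\Re\lambda^*>s=\omega_0(A)\ge\Re\lambda^*$, so the interval $(\omega_0(A),(1+\alpha)\Re\lambda^*)$ is a nonempty subinterval of $(\Re\lambda^*,(1+\alpha)\Re\lambda^*)$; picking $\nu$ in it, the definition of $\omega_0(A)$ furnishes a constant $C$ with $\|e^{tA}\|_{B(L^2)}\le Ce^{\nu t}$ for all $t\ge0$, i.e. \eqref{growth} holds for some $\nu$ with $\Re\lambda^*<\nu<(1+\alpha)\Re\lambda^*$. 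The only nonroutine ingredient here is the identity $\omega_0(A)=s$, i.e. that the growth bound of the non-self-adjoint generator $A$ is governed by its spectrum alone; this is exactly the content of the spectral mapping theorem and is the reason Section \ref{sect:SMT} is needed, whereas the perturbation facts about $\sigma_{\mathrm{ess}}(A)$ and the interval chase with $\alpha$ are elementary.
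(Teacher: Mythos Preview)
Your proposal is correct and follows essentially the same route as the paper: Weyl's theorem for $\sigma_{\mathrm{ess}}(A)\subset i\R$, finiteness of eigenvalues in the right half-plane to select $\lambda^*$ of maximal real part, and the spectral mapping theorem $\sigma(e^A)=e^{\sigma(A)}$ from \cite{GJLS} (and Section~\ref{sect:SMT}) to pin down the spectral radius. The only cosmetic difference is that the paper invokes Lemma~3 of \cite{SS2} for the passage from spectral radius to the growth bound \eqref{growth}, whereas you spell this out via Gelfand's formula and the definition of $\omega_0(A)$; these are the same argument, and your slightly more flexible choice of $\lambda^*$ (any eigenvalue with $\Re\lambda^*>s/(1+\alpha)$) is harmless since one may simply take $\Re\lambda^*=s$.
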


\begin{proof}
Since $Df(\varphi)$ decays exponentially at infinity,
Weyl's essential spectrum theorem implies that
$\sigma_{{\rm ess}}(A)\subset \{z\in \C:\Re z=0\}$.
Moreover, the number of eigenvalues of $A=JH$
in $\{z\in \C:\Re z>0\}$ is finite
(see, e.g., Theorem 5.8 of \cite{GSS2}).
Therefore, by (H3), there exists an eigenvalue $\lambda^*$ of $A$
such that $\Re \lambda^*=\max \{\Re z: z\in \sigma (A)\}>0$.
Further, by the spectral mapping theorem due to
Gesztesy, Jones, Latushkin and Stanislavova \cite{GJLS},
we have $\sigma (e^A)=e^{\sigma (A)}$.
Here we need some modification of \cite{GJLS}
when $\varphi$ is not real-valued.
We shall discuss it in Section \ref{sect:SMT}.
Then, the spectral radius of $e^{A}$ is $e^{\Re \lambda^*}$.
Finally, by Lemma 3 of \cite{SS2}, we see that
$e^{tA}$ satisfies \eqref{growth} for some $\nu$ with
$\Re \lambda^*<\nu<(1+\alpha)\Re \lambda^*$.
\end{proof}

\begin{lem}\label{lem21}
There exists a constant $C$ such that
$$\|h_j(v)\|_{L^2}+\|h_j(v)\|_{L^{r_j'}}
\le C\left(\|v\|_{H^2}+\|v\|_{H^2}^{r_j-2}\right)\|v\|_{H^2}$$
for all $v\in H^2(\R^N)$.
\end{lem}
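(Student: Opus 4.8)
The plan is to treat $h_j(v)$ as the first-order Taylor remainder of $f_j$ at the point $\varphi$, bound it pointwise using hypothesis (H1), and then pass to the $L^2$ and $L^{r_j'}$ norms via Sobolev embeddings, the only real issue being to check that every Lebesgue exponent that arises is subcritical for $H^2(\R^N)$.

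First I would use that $f_j\in C^1(\R^2,\R^2)$, together with the fundamental theorem of calculus, to write
$$h_j(v)=J\int_0^1\big[Df_j(\varphi+sv)-Df_j(\varphi)\big]v\,ds,$$
so that pointwise $|h_j(v)(x)|\le\int_0^1|Df_j(\varphi(x)+sv(x))-Df_j(\varphi(x))|\,|v(x)|\,ds$. Applying \eqref{ass:f} with $z_1=\varphi(x)+sv(x)$ and $z_2=\varphi(x)$, so that $|z_1-z_2|=s|v(x)|\le|v(x)|$, and using the elementary inequality $|a+b|^{p_j-2}\le C(|a|^{p_j-2}+|b|^{p_j-2})$ in the range $p_j>2$, I obtain
$$|h_j(v)(x)|\le C|v(x)|^{p_j}\ \ (1<p_j\le 2),\qquad |h_j(v)(x)|\le C\big(|\varphi(x)|^{p_j-2}+|v(x)|^{p_j-2}\big)|v(x)|^2\ \ (p_j>2).$$
By (H2) and elliptic regularity, $\varphi\in H^2(\R^N)\cap C^2(\R^N)$ is bounded, so in the second case $|h_j(v)(x)|\le C(|v(x)|^2+|v(x)|^{p_j})$ with $C$ depending on $\|\varphi\|_{L^\infty}$.

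It remains to take norms. Since $p_jr_j'=r_j$, one has $\||v|^{p_j}\|_{L^2}=\|v\|_{L^{2p_j}}^{p_j}$, $\||v|^{p_j}\|_{L^{r_j'}}=\|v\|_{L^{r_j}}^{p_j}$, and (when $p_j>2$) $\||v|^2\|_{L^2}=\|v\|_{L^4}^2$, $\||v|^2\|_{L^{r_j'}}=\|v\|_{L^{2r_j'}}^2$. All the exponents $2p_j=2r_j-2$, $r_j$, $4$, $2r_j'$ lie in the range of the Sobolev embedding $H^2(\R^N)\hookrightarrow L^q(\R^N)$: indeed $r_j=p_j+1<2^*=2N/(N-2)$ and $2p_j<2(2^*-1)=2(N+2)/(N-2)\le 2N/(N-4)$, while the value $4$ occurs only when $p_j>2$, which by $p_j<2^*-1$ forces $N\le 5$ and hence $4<2N/(N-4)$ (for $N\le 4$ the embedding holds up to any finite exponent, and for $N\le 3$ even $H^2\hookrightarrow L^\infty$). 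Combining these estimates with the two pointwise bounds gives, in both cases, $\|h_j(v)\|_{L^2}+\|h_j(v)\|_{L^{r_j'}}\le C(\|v\|_{H^2}^{p_j}+\|v\|_{H^2}^2)=C(\|v\|_{H^2}^{r_j-1}+\|v\|_{H^2}^2)$ (the term $\|v\|_{H^2}^2$ being superfluous when $p_j\le 2$), which is precisely the asserted inequality.

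The main obstacle is really just this exponent bookkeeping: one must notice that the only term that could fail to be controlled by $\|v\|_{H^2}$ in high dimensions, namely $|v|^2$, appears solely in the regime $p_j>2$, which the subcriticality hypothesis $p_j<2^*-1$ confines to $N\le 5$ where it causes no trouble; apart from that, the argument is a routine combination of the Taylor remainder formula, assumption (H1), the boundedness of $\varphi$, and the Sobolev embedding theorem.
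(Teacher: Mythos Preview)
Your argument is correct and follows the same route as the paper: the integral Taylor remainder formula for $h_j(v)$, the pointwise bound from \eqref{ass:f}, and Sobolev embedding of $H^2$ into the relevant Lebesgue spaces. The only cosmetic difference is that the paper keeps the $\varphi$-dependent factor as $\|\varphi\|_{H^2}^{r_j-3}$ (via H\"older with exponents $r_j$ and $2p_j$) rather than invoking $\varphi\in L^\infty$ as you do, but this changes nothing of substance.
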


\begin{proof}
Since
$$h_j(v)=J\int_{0}^{1}
\{Df_j(\varphi+\theta v)-Df_j(\varphi)\}v\,d\theta,$$
it follows from \eqref{ass:f} that
$$\|h_j(v)\|_{L^2}+\|h_j(v)\|_{L^{r_j'}}
\le C\left\{\begin{array}{ll}
\|v\|_{H^2}^{r_j-1} &
\hspace{2mm} \mbox{if} \hspace{3mm}
2<r_j\le 3,\\
(\|\varphi\|_{H^2}^{r_j-3}+\|v\|_{H^2}^{r_j-3})\|v\|_{H^2}^2 &
\hspace{2mm} \mbox{if} \hspace{3mm}
r_j>3,
\end{array}\right.$$
which implies the desired estimate.
\end{proof}

In what follows,
let $\lambda$ and $\mu$ be numbers satisfying
\begin{equation}\label{lam}
0<\lambda<\Re \lambda^*<\nu<\mu<(1+\alpha)\lambda,
\end{equation}
and we define
$$\|v\|_{X_T}=\|v\|_{L^{\infty,\lambda}_TH^2}
+\|\partial_tv\|_{L^{q_1,\lambda}_TL^{r_1}}
+\|\partial_tv\|_{L^{q_2,\lambda}_TL^{r_2}}.$$

\begin{lem}\label{lem22}
Let $v(t)$ be an $H^2$-solution of \eqref{eq:v} in $[0,\infty)$.
Then, there exists a constant $C$ independent of $v$ and $T$ such that
\begin{align*}
\|v\|_{X_T}\le C &
\left(\|v\|_{L^{\infty,\lambda}_TL^2}
+\|\partial_tv\|_{L^{\infty,\lambda}_TL^2}
+\|\partial_tv\|_{L^{q_1,\lambda}_TL^{r_1}}
+\|\partial_tv\|_{L^{q_2,\lambda}_TL^{r_2}}\right) \\
&+C\left(\|v\|_{X_T}^2+\|v\|_{X_T}^{r_1-1}+\|v\|_{X_T}^{r_2-1}\right).
\end{align*}
\end{lem}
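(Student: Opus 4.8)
The plan is to treat \eqref{eq:v} as a perturbed linear evolution and apply the Strichartz-type estimate of Proposition \ref{prop2} twice: once to control $v$ itself in $L^{\infty,\lambda}_TH^2$, and once to control the time derivative $\partial_tv$ in the two Strichartz spaces $L^{q_j,\lambda}_TL^{r_j}$. First I would observe that since $v$ solves $\partial_tv=Av+h_1(v)+h_2(v)$ with $v(0)=v_0$, Duhamel's formula gives $v(t)=e^{tA}v_0+\Gamma[h_1(v)+h_2(v)](t)$, and so Proposition \ref{prop2} (with the admissible pair $(\infty,2)$ on the left, recalling that $H^2$ is preserved by $e^{tA_0}$ and that $V=Df(\varphi)$ is bounded, so one works componentwise with $(1-\Delta)v$) yields
\begin{equation*}
\|v\|_{L^{\infty,\lambda}_TH^2}
\le C\left(e^{\nu T}\|v_0\|_{H^2}
+\|h_1(v)\|_{L^{q_1',\mu}_TL^{r_1'}}
+\|h_2(v)\|_{L^{q_2',\mu}_TL^{r_2'}}\right).
\end{equation*}
Here the term $e^{\nu T}\|v_0\|_{H^2}$ is, up to the constant, $\|v\|_{L^{\infty,\lambda}_TL^2}$-type data already present on the right-hand side, modulo absorbing $\|v_0\|_{H^2}$; more carefully, one should instead feed in the bound from Lemma \ref{lem1} to express things through $\|v\|_{L^{\infty,\lambda}_TL^2}$. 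The nonlinear terms are handled by Lemma \ref{lem21}: $\|h_j(v)\|_{L^{r_j'}}\le C(\|v\|_{H^2}+\|v\|_{H^2}^{r_j-2})\|v\|_{H^2}$ pointwise in $t$, which after multiplying by $e^{-\mu t}$ and taking the $L^{q_j'}_T$ norm, and using $\mu<(1+\alpha)\lambda\le r_j\lambda/(r_j-1)\cdot(\text{something})$—this is where the constraint \eqref{lam} is used—gives $\|h_j(v)\|_{L^{q_j',\mu}_TL^{r_j'}}\le C(\|v\|_{X_T}^2+\|v\|_{X_T}^{r_j-1})$.

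Second, I would differentiate \eqref{eq:v} in $t$ and set $w=\partial_tv$. Since $f\in C^1$ one formally has $\partial_tw=Aw+Dh(v)\partial_tv$ where $Dh(v)\zeta=J[Df(\varphi+v)-Df(\varphi)]\zeta$; so $w$ solves the same linear equation with forcing $\tilde h(v):=[Dh_1(v)+Dh_2(v)]w$ and data $w(0)=Av_0+h(v_0)$. Applying Proposition \ref{prop2} again with the left-hand admissible pair $(q_j,r_j)$ gives
\begin{equation*}
\|\partial_tv\|_{L^{q_j,\lambda}_TL^{r_j}}
\le C\left(e^{\nu T}\|w(0)\|_{L^2}
+\|\tilde h(v)\|_{L^{q_1',\mu}_TL^{r_1'}}
+\|\tilde h(v)\|_{L^{q_2',\mu}_TL^{r_2'}}\right),
\end{equation*}
and the data term is controlled by $\|\partial_tv\|_{L^{\infty,\lambda}_TL^2}$ again via Lemma \ref{lem1}. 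For the forcing I would prove an analogue of Lemma \ref{lem21} for $\tilde h$: from \eqref{ass:f}, $|Df_j(\varphi+v)-Df_j(\varphi)|\le C(|v|^{p_j-1}+(|\varphi|^{p_j-2}+|v|^{p_j-2})|v|)$, so by Hölder (distributing the exponent among an $L^\infty_x$ bound on $v$ coming from $H^2\hookrightarrow$ ... wait—this is exactly the embedding one must avoid), instead one writes $\|\tilde h_j(v)\|_{L^{r_j'}}\le \|Df_j(\varphi+v)-Df_j(\varphi)\|_{L^{?}}\|w\|_{L^{r_j}}$ and estimates the coefficient in a Lebesgue norm using only $H^1$ (hence Sobolev) embeddings in the $z$-variable together with the $H^2$ bound on $v$, producing a factor $\|v\|_{H^2}+\|v\|_{H^2}^{r_j-2}$; then one multiplies by $e^{-\mu t}$, pulls out $\sup_t e^{-\lambda t}(\|v\|_{H^2}+\|v\|_{H^2}^{r_j-2})\le C(1+\|v\|_{X_T}^{r_j-3})\|v\|_{L^{\infty,\lambda}_TH^2}^{0}\cdots$ and is left with $\|e^{-\lambda t}w\|_{L^{q_j}_TL^{r_j}}$, using $\mu<(1+\alpha)\lambda$ to absorb the extra exponential weight, again exactly as in \eqref{lam}. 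Summing the two displays and the $j=1,2$ contributions gives
\begin{equation*}
\|v\|_{X_T}\le C\left(\|v\|_{L^{\infty,\lambda}_TL^2}+\|\partial_tv\|_{L^{\infty,\lambda}_TL^2}+\|\partial_tv\|_{L^{q_1,\lambda}_TL^{r_1}}+\|\partial_tv\|_{L^{q_2,\lambda}_TL^{r_2}}\right)+C\left(\|v\|_{X_T}^2+\|v\|_{X_T}^{r_1-1}+\|v\|_{X_T}^{r_2-1}\right),
\end{equation*}
which is the claim.

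The main obstacle I anticipate is the nonlinear estimate for $\tilde h(v)=[Df(\varphi+v)-Df(\varphi)]\partial_tv$ in the Strichartz-dual norm $L^{q_j',\mu}_TL^{r_j'}$ \emph{without invoking $H^2\hookrightarrow L^\infty$}—this is precisely the point where the dimension restriction $N\le 3$ of \cite{CCO2} was used and where the present paper must do something new. The resolution is to keep $\partial_tv$ in the Strichartz norm $L^{r_j}$ (not in $L^\infty$) and to bound the coefficient $Df(\varphi+v)-Df(\varphi)$ in the complementary Lebesgue exponent, using the fractional growth \eqref{ass:f} and $H^1$-Sobolev embeddings only; one must check that the resulting exponents are compatible with the admissible pair $(q_j,r_j)$ associated to $r_j=p_j+1$, which is exactly why that particular choice of admissible pair was made. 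A secondary technical point is the justification that $w=\partial_tv$ is itself an $H^1$- (indeed admissible-space-) valued solution of the differentiated equation, which follows from the $H^2$-regularity of $H^1$-solutions of \eqref{nls} together with a standard difference-quotient/approximation argument; I would state this as a preliminary remark rather than belabor it. The bookkeeping with the weighted norms \eqref{norm} and the strict inequalities \eqref{lam} is routine once one notes that multiplication by $e^{-\lambda t}$ versus $e^{-\mu t}$ costs a harmless factor $e^{(\mu-\lambda)T}$ absorbed into the $e^{\lambda T}$-prefactor of $\|\cdot\|_{X_T}$, using $\mu<(1+\alpha)\lambda$.
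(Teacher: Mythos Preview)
Your approach is over-engineered and has a genuine gap. Look again at the statement: the two Strichartz norms $\|\partial_tv\|_{L^{q_j,\lambda}_TL^{r_j}}$ already appear on the \emph{right}-hand side, so the only nontrivial content of the lemma is the bound on $\|v\|_{L^{\infty,\lambda}_TH^2}$. The paper obtains this by a purely pointwise, elementary argument with no Strichartz at all: since $A=JH_0+JDf(\varphi)$ with $JDf(\varphi)$ bounded, elliptic regularity gives $\|v(t)\|_{H^2}\le C(\|v(t)\|_{L^2}+\|Av(t)\|_{L^2})$; then the equation $Av=\partial_tv-h(v)$ and the $L^2$ bound of Lemma~\ref{lem21} yield
\[
\|v(t)\|_{H^2}\le C\bigl(\|v(t)\|_{L^2}+\|\partial_tv(t)\|_{L^2}
+\|v(t)\|_{H^2}^2+\|v(t)\|_{H^2}^{r_1-1}+\|v(t)\|_{H^2}^{r_2-1}\bigr),
\]
and taking the $L^{\infty,\lambda}_T$ norm finishes the proof. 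The Strichartz machinery (Proposition~\ref{prop2}, Lemmas~\ref{lem23}--\ref{lem24}) is used only \emph{later}, in the proof of Theorem~\ref{mainthm}, to control the quantities that Lemma~\ref{lem22} places on its right-hand side.

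Your route via Duhamel and Proposition~\ref{prop2} does not close. First, Proposition~\ref{prop2} yields $L^{q,\lambda}_TL^r$ bounds, not $L^{\infty,\lambda}_TH^2$; to upgrade to $H^2$ you would have to apply $(1-\Delta)$ through the equation, but $(1-\Delta)$ does not commute with $V=Df(\varphi)$, and the commutator produces first-order terms in $v$ that would have to be re-absorbed. Second, and more fatally, the data term $e^{\nu T}\|v_0\|_{H^2}$ (and likewise $e^{\nu T}\|w(0)\|_{L^2}$ for the differentiated equation) is \emph{not} controlled by $\|v\|_{L^{\infty,\lambda}_TL^2}$ or $\|\partial_tv\|_{L^{\infty,\lambda}_TL^2}$: those norms give at best $e^{\lambda T}\|v_0\|_{L^2}$, and since $\nu>\lambda$ by \eqref{lam} the exponential factors go the wrong way. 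Lemma~\ref{lem1} does not help here, as it bounds the solution in terms of the data, not conversely. So the inequality you arrive at is not the one stated.
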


\begin{proof}
By Lemma \ref{lem21}, we have
\begin{align*}
&\|v(t)\|_{H^2}\le C(\|v(t)\|_{L^2}+\|Av(t)\|_{L^2}) \\
&\le C(\|v(t)\|_{L^2}+\|\partial_tv(t)\|_{L^2}+\|h(v(t))\|_{L^2}) \\
&\le C(\|v(t)\|_{L^2}+\|\partial_tv(t)\|_{L^2}+\|v(t)\|_{H^2}^2
+\|v(t)\|_{H^2}^{r_1-1}+\|v(t)\|_{H^2}^{r_2-1})
\end{align*}
for all $t\in [0,T]$. Thus,
\begin{align*}
\|v\|_{L^{\infty,\lambda}_TH^2}
\le C & (\|v\|_{L^{\infty,\lambda}_TL^2}
+\|\partial_tv\|_{L^{\infty,\lambda}_TL^2}) \\
&+C(\|v\|_{L^{\infty,\lambda}_TH^2}^2
+\|v\|_{L^{\infty,\lambda}_TH^2}^{r_1-1}
+\|v\|_{L^{\infty,\lambda}_TH^2}^{r_2-1}),
\end{align*}
which implies the desired estimate.
\end{proof}

\begin{lem}\label{lem23}
There exists a constant independent of $v$ and $T$ such that
$$\|h_j(v)\|_{L^{q_j',\mu}_TL^{r_j'}}
\le C\left(\|v\|_{X_T}^2+\|v\|_{X_T}^{r_j-1}\right).$$
\end{lem}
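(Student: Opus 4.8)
The plan is to estimate $\|h_j(v)\|_{L^{r_j'}}$ pointwise in $t$ by a power of $\|v(t)\|_{H^2}$ and then integrate in time against the exponential weight, using that the norm $\|\cdot\|_{X_T}$ controls $\|v\|_{L^{\infty,\lambda}_T H^2}$. First I would recall, as in Lemma \ref{lem21}, the integral representation
$$h_j(v)=J\int_0^1\{Df_j(\varphi+\theta v)-Df_j(\varphi)\}v\,d\theta,$$
and apply the structural bound \eqref{ass:f} on $Df_j$ together with the Hölder inequality and the Sobolev embedding $H^2(\R^N)\hookrightarrow L^\infty(\R^N)$ is \emph{not} available for all $N$; instead I would use $H^2\hookrightarrow L^s$ for the exponents $s$ needed to place each factor. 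Concretely, when $2<r_j\le 3$ one gets $\|h_j(v)(t)\|_{L^{r_j'}}\le C\|v(t)\|_{H^2}^{r_j-1}$, and when $r_j>3$ one gets $\|h_j(v)(t)\|_{L^{r_j'}}\le C(\|\varphi\|_{H^2}^{r_j-3}+\|v(t)\|_{H^2}^{r_j-3})\|v(t)\|_{H^2}^2$, exactly paralleling Lemma \ref{lem21}; in either case the right-hand side is bounded by $C(\|v(t)\|_{H^2}^2+\|v(t)\|_{H^2}^{r_j-1})$.

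Next I would convert this pointwise bound into the weighted space-time norm. Writing $w(t)=e^{-\lambda t}\|v(t)\|_{H^2}$, so that $\sup_{t\in[0,T]}e^{\lambda(T-t)}\|v(t)\|_{H^2}=\|v\|_{L^{\infty,\lambda}_TH^2}\le\|v\|_{X_T}$, I estimate
$$\|h_j(v)\|_{L^{q_j',\mu}_TL^{r_j'}}
=e^{\mu T}\Big(\int_0^T e^{-\mu q_j' t}\|h_j(v)(t)\|_{L^{r_j'}}^{q_j'}\,dt\Big)^{1/q_j'}.$$
Bounding $\|h_j(v)(t)\|_{L^{r_j'}}\le C e^{2\lambda t}(\ldots)^2+C e^{(r_j-1)\lambda t}(\ldots)^{r_j-1}$ where $(\ldots)=e^{-\lambda t}\|v(t)\|_{H^2}\le e^{-\lambda T}\|v\|_{X_T}$, the time integral of $e^{-\mu q_j' t}e^{k\lambda q_j' t}$ with $k\in\{2,r_j-1\}$ is finite and $O(e^{(k\lambda-\mu)q_j'T})$ \emph{provided} $k\lambda<\mu$ — but in fact the ratio condition \eqref{lam}, namely $\mu<(1+\alpha)\lambda$ with $\alpha=\min\{1,r_1-2,r_2-2\}$, guarantees $\mu<2\lambda$ and $\mu<(r_j-1)\lambda$, so these exponents are positive and the integral is dominated by a constant times $e^{(k\lambda-\mu)T}$ or, after the $1/q_j'$ root and multiplication by $e^{\mu T}$, by $e^{k\lambda T}\le e^{k\lambda T}$; combining with $(\ldots)\le e^{-\lambda T}\|v\|_{X_T}$ raised to the $k$-th power collapses all the $T$-dependence and yields $C(\|v\|_{X_T}^2+\|v\|_{X_T}^{r_j-1})$.

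The step I expect to require the most care is the bookkeeping of exponential weights — making sure that the gain $e^{-k\lambda T}$ from the $k$ factors of $v$ exactly cancels the loss $e^{\mu T}$ from the definition \eqref{norm} together with the $e^{(k\lambda-\mu)T}$ from the time integral, so that the final bound is genuinely independent of $T$. This is precisely where the inequality chain $\lambda<\nu<\mu<(1+\alpha)\lambda$ in \eqref{lam} is used: it is exactly the condition $\mu/\lambda<1+\alpha\le\min\{2,r_j-1\}$ that keeps every exponent $k\lambda-\mu$ strictly negative (or more precisely makes the relevant integrals converge with the right sign), which is why $\nu$ had to be squeezed into $(\Re\lambda^*,(1+\alpha)\Re\lambda^*)$ back in Lemma \ref{lem:chi}. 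The Sobolev exponent check for $H^2\hookrightarrow L^s$ in the subcritical range $1<p_j<2^*-1$ is routine and I would not belabor it.
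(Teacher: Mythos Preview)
Your approach is essentially the paper's: invoke Lemma~\ref{lem21} for the pointwise $L^{r_j'}$ bound, insert the exponential weights, and use $\mu<(1+\alpha)\lambda$ from \eqref{lam} to make the constant $T$-independent. Two sign slips to correct: the clause ``provided $k\lambda<\mu$'' should read $k\lambda>\mu$, and in your last paragraph ``strictly negative'' should be ``strictly positive'' --- it is precisely $k\lambda-\mu>0$ that gives $\int_0^T e^{(k\lambda-\mu)q_j't}\,dt\le C\,e^{(k\lambda-\mu)q_j'T}$ with $C$ independent of $T$, so that after the $1/q_j'$ root, the prefactor $e^{\mu T}$, and the $k$ copies of $e^{-\lambda T}$ from $\|e^{-}_{\lambda}v\|_{L^\infty_TH^2}^k$, all $T$-dependence cancels.
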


\begin{proof}
By Lemma \ref{lem21}, we have
$$e^{-\mu t}\|h_j(v(t))\|_{L^{r_j'}}
\le Ce^{(2\lambda-\mu)t}\|e^{-}_{\lambda}v\|_{L_T^{\infty}H^2}^2
+Ce^{((r_j-1)\lambda-\mu)t}\|e^{-}_{\lambda}v\|_{L_T^{\infty}H^2}^{r_j-1}$$
for all $t\in [0,T]$.
Moreover, by \eqref{alp} and \eqref{lam}, we have
\begin{align*}
e^{\mu T}\|e^{-}_{\mu}h_j(v)\|_{L_T^{q_j'}L^{r_j'}}
&\le Ce^{2\lambda T}\|e^{-}_{\lambda}v\|_{L_T^{\infty}H^2}^2
+Ce^{(r_j-1)\lambda T}\|e^{-}_{\lambda}v\|_{L_T^{\infty}H^2}^{r_j-1} \\
&\le C(\|v\|_{X_T}^2+\|v\|_{X_T}^{r_j-1}),
\end{align*}
which implies the desired estimate.
\end{proof}

\begin{lem}\label{lem24}
There exists a constant $C$ independent of $v$ and $T$ such that
$$\|\partial_th_j(v)\|_{L^{q_j',\mu}_TL^{r_j'}}
\le C\left(\|v\|_{X_T}^2+\|v\|_{X_T}^{r_j-1}\right).$$
\end{lem}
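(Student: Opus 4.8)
The plan is to follow the proof of Lemma~\ref{lem23}: first obtain a pointwise-in-time bound for $\partial_th_j(v)$ of the same type as Lemma~\ref{lem21}, and then deal with the exponential time-weights. The only new feature compared with Lemma~\ref{lem23} is that $\partial_tv$ now genuinely depends on $t$, so it must be kept inside the $L^{q_j,\lambda}_TL^{r_j}$-norm that enters $\|\cdot\|_{X_T}$.

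Since $v(t)$ is an $H^2$-solution of \eqref{eq:v}, the chain rule gives
$$\partial_th_j(v)=J\bigl[Df_j(\varphi+v)-Df_j(\varphi)\bigr]\partial_tv.$$
(If $\|v\|_{X_T}=\infty$ the assertion is trivial, so we may assume $\partial_tv(t)\in L^{r_j}$ for a.e.\ $t\in[0,T]$.) Using \eqref{ass:f} with $p_j-1=r_j-2$ and $p_j-2=r_j-3$, I would bound $\|\partial_th_j(v(t))\|_{L^{r_j'}}$ by H\"older's inequality in $x$. When $2<r_j\le3$, one has $|\partial_th_j(v)|\le C|v|^{r_j-2}|\partial_tv|$, and the split $\tfrac1{r_j'}=\tfrac{r_j-2}{r_j}+\tfrac1{r_j}$ gives the bound $C\|v\|_{L^{r_j}}^{r_j-2}\|\partial_tv\|_{L^{r_j}}$; when $r_j>3$, one has $|\partial_th_j(v)|\le C(|\varphi|^{r_j-3}+|v|^{r_j-3})|v||\partial_tv|$, and the three-factor split $\tfrac1{r_j'}=\tfrac{r_j-3}{r_j}+\tfrac1{r_j}+\tfrac1{r_j}$, together with $\varphi\in H^1\hookrightarrow L^{r_j}$, gives the bound $C(\|v\|_{L^{r_j}}+\|v\|_{L^{r_j}}^{r_j-2})\|\partial_tv\|_{L^{r_j}}$. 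Since $2<r_j<2^*$, bounding $\|v\|_{L^{r_j}}\le C\|v\|_{H^2}$ yields in both cases the analogue of Lemma~\ref{lem21}:
$$\|\partial_th_j(v(t))\|_{L^{r_j'}}\le C\bigl(\|v(t)\|_{H^2}+\|v(t)\|_{H^2}^{r_j-2}\bigr)\|\partial_tv(t)\|_{L^{r_j}},\qquad t\in[0,T].$$

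Next I would insert the time-weights as in Lemma~\ref{lem23}. Writing $\|v(t)\|_{H^2}\le e^{\lambda t}\|e^{-}_{\lambda}v\|_{L_T^{\infty}H^2}$ and $\|\partial_tv(t)\|_{L^{r_j}}=e^{\lambda t}\bigl(e^{-\lambda t}\|\partial_tv(t)\|_{L^{r_j}}\bigr)$, the last display becomes
$$e^{-\mu t}\|\partial_th_j(v(t))\|_{L^{r_j'}}\le C\Bigl(e^{(2\lambda-\mu)t}\|e^{-}_{\lambda}v\|_{L_T^{\infty}H^2}+e^{((r_j-1)\lambda-\mu)t}\|e^{-}_{\lambda}v\|_{L_T^{\infty}H^2}^{r_j-2}\Bigr)e^{-\lambda t}\|\partial_tv(t)\|_{L^{r_j}}.$$
Now take the $L_T^{q_j'}$-norm in $t$ and, unlike in Lemma~\ref{lem23}, split it by H\"older in $t$ as $L_T^s\cdot L_T^{q_j}$ with $\tfrac1s=\tfrac1{q_j'}-\tfrac1{q_j}=1-\tfrac2{q_j}\ge 0$ (legitimate since $q_j\ge2$, hence $q_j'\le q_j$), placing $e^{-}_{\lambda}\partial_tv$ into $L_T^{q_j}L^{r_j}$. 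By \eqref{alp} and \eqref{lam}, $\mu<(1+\alpha)\lambda\le2\lambda$ and $\mu<(1+\alpha)\lambda\le(r_j-1)\lambda$ (using $\alpha\le\min\{1,r_j-2\}$), so both exponents $2\lambda-\mu$ and $(r_j-1)\lambda-\mu$ are positive, and the $L_T^s$-norms of the corresponding increasing exponentials $e^{(2\lambda-\mu)t}$ and $e^{((r_j-1)\lambda-\mu)t}$ are $\le Ce^{(2\lambda-\mu)T}$ and $\le Ce^{((r_j-1)\lambda-\mu)T}$ respectively. Multiplying through by $e^{\mu T}$ and regrouping the powers of $e^{\lambda T}$ into the weighted norms gives
$$\|\partial_th_j(v)\|_{L^{q_j',\mu}_TL^{r_j'}}\le C\|v\|_{L^{\infty,\lambda}_TH^2}\|\partial_tv\|_{L^{q_j,\lambda}_TL^{r_j}}+C\|v\|_{L^{\infty,\lambda}_TH^2}^{r_j-2}\|\partial_tv\|_{L^{q_j,\lambda}_TL^{r_j}}\le C\bigl(\|v\|_{X_T}^2+\|v\|_{X_T}^{r_j-1}\bigr),$$
which is the claim.

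The H\"older bookkeeping in $x$ is routine, and the overall structure mirrors Lemma~\ref{lem23}; the one genuinely new point --- and essentially the only place where anything could go wrong --- is the time-H\"older step, which must simultaneously absorb the exponential prefactor into $e^{\mu T}$ and leave $\partial_tv$ measured in the $L_T^{q_j}$-norm used in $X_T$. This works precisely because $q_j'\le q_j$ (as $q_j\ge2$) and because \eqref{lam}, through $\alpha\le\min\{1,r_j-2\}$, forces $\mu<2\lambda$ and $\mu<(r_j-1)\lambda$. I do not anticipate any obstacle beyond aligning these inequalities.
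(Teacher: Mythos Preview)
Your proof is correct and follows essentially the same route as the paper's: the chain rule formula for $\partial_t h_j(v)$, the pointwise bound $\|\partial_th_j(v(t))\|_{L^{r_j'}}\le C(\|v(t)\|_{H^2}+\|v(t)\|_{H^2}^{r_j-2})\|\partial_tv(t)\|_{L^{r_j}}$, and the time-H\"older split placing $e^{-}_{\lambda}\partial_tv$ into $L_T^{q_j}$ are exactly what the paper does (it just omits the explicit H\"older exponents you spell out). The only cosmetic difference is that you detail the $x$-H\"older bookkeeping leading to the pointwise bound, whereas the paper simply invokes \eqref{ass:f}.
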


\begin{proof}
Since
$\partial_th_j(v(t))=J\{Df_j(\varphi+v(t))-Df_j(\varphi)\}\partial_tv(t)$,
it follows from \eqref{ass:f} that
$$\|\partial_th_j(v(t))\|_{L^{r_j'}}
\le C(\|v(t)\|_{H^2}+\|v(t)\|_{H^2}^{r_j-2})\|\partial_tv(t)\|_{L^{r_j}}.$$
Thus we have
\begin{align*}
e^{-\mu t}\|\partial_th_j(v(t))\|_{L^{r_j'}}
\le & \, Ce^{(2\lambda-\mu) t}\|e^{-}_{\lambda}v\|_{L_T^{\infty}H^2}
\cdot e^{-\lambda t}\|\partial_tv(t)\|_{L^{r_j}} \\
&+Ce^{((r_j-1)\lambda-\mu) t}\|e^{-}_{\lambda}v\|_{L_T^{\infty}H^2}^{r_j-2}
\cdot e^{-\lambda t}\|\partial_tv(t)\|_{L^{r_j}}
\end{align*}
for all $t\in [0,T]$.
Moreover, by \eqref{alp}, \eqref{lam}
and the H\"older inequality,
\begin{align*}
&e^{\mu T}\|e^{-}_{\mu}\partial_th_j(v)\|_{L_T^{q_j'}L^{r_j'}} \\
\le & Ce^{2\lambda T}\|e^{-}_{\lambda}v\|_{L_T^{\infty}H^2}
\|e^{-}_{\lambda}\partial_tv\|_{L_T^{q_j}L^{r_j}}
+Ce^{(r_j-1)\lambda T}\|e^{-}_{\lambda}v\|_{L_T^{\infty}H^2}^{r_j-2}
\|e^{-}_{\lambda}\partial_tv\|_{L_T^{q_j}L^{r_j}} \\
\le & C(\|v\|_{X_T}^2+\|v\|_{X_T}^{r_j-1}).
\end{align*}
This completes the proof.
\end{proof}

\begin{proof}[Proof of Theorem \ref{mainthm}]
We use the argument in \cite[Section 6]{GSS2}
(see also \cite{CCO2,SS2}).
Suppose that the standing wave $e^{i\omega t}\varphi$
of \eqref{nls} is orbitally stable.
For small $\delta>0$, let $u_{\delta}(t)$ be the solution of
\eqref{nls} with $u_{\delta}(0)=\varphi+\delta \Re \chi$,
where $\chi\in H^2(\R^N,\C)^2$ is the eigenfunction of $A$
corresponding to the eigenvalue $\lambda^*$
given in Lemma \ref{lem:chi}.
Note that $A\overline{\chi}=\overline{\lambda^*}\overline{\chi}$.
Since either $\Re \chi\not\in \ker A$ or $\Im \chi\not\in \ker A$,
we may assume that $\Re \chi\not\in \ker A$.
Since we assume that $e^{i\omega t}\varphi$
is orbitally stable in $H^1(\R^N)$,
the $H^1$-solution $u_{\delta}(t)$ of \eqref{nls} exists globally
for sufficiently small $\delta>0$.
Moreover, since $\varphi$, $\chi\in H^2(\R^N)$,
by the $H^2$-regularity for \eqref{nls},
we see that $u_{\delta}\in
C([0,\infty),H^2(\R^N))\cap C^1([0,\infty),L^2(\R^N))$ and
$\partial_t u_{\delta}\in L^{q_1}_TL^{r_1}\cap L^{q_2}_TL^{r_2}$
for all $T>0$ (see \cite{kat,tsu} and also \cite[Section 5.2]{caz}).
By the change of variables
\begin{equation}\label{cv}
u_{\delta}(t)=e^{i\omega t}(\varphi+v_{\delta}(t)),
\end{equation}
we see that $v_{\delta}$ has the same regularity as that of $u_{\delta}$,
and satisfies
\begin{align}
\partial_tv_{\delta}(t)&=Av_{\delta}(t)+h(v_{\delta}(t)), \quad
v_{\delta}(0)=\delta \Re \chi, \nonumber \\
v_{\delta}(t)&=\delta \Re (e^{\lambda^*t}\chi)+\Gamma [h(v_{\delta})](t),
\label{eq:n2}\\
\partial_tv_{\delta}(t)&=\delta \Re (\lambda^* e^{\lambda^*t}\chi)
+e^{tA}h(\delta \Re \chi)+\Gamma [\partial_t h(v_{\delta})](t)
\label{eq:n3}
\end{align}
for all $t\ge 0$.
Let $\varepsilon_0>0$ be a small positive number to be determined later,
let $k=1$ if $\Im \lambda^*=0$, and
$k=\exp(2\pi \Re \lambda^*/|\Im \lambda^*|)$ if $\Im \lambda^*\ne 0$,
and define $T_{\delta}$ by
\begin{equation}\label{td}
\log \frac{\varepsilon_0}{k\delta}\le \Re \lambda^*T_{\delta}
\le \log \frac{\varepsilon_0}{\delta}, \quad
\Im \lambda^*T_{\delta}\in 2\pi \mathbb{Z}.
\end{equation}
for small $\delta>0$.
First, we prove that there exist constants $C_1$ and $\varepsilon_0$
independent of $\delta$ such that
\begin{equation}\label{be1}
\|v_{\delta}\|_{X_{T_{\delta}}}\le C_1\varepsilon_0
\end{equation}
for small $\delta$.
For $T\in (0,T_{\delta}]$, by \eqref{eq:n2},
Proposition \ref{prop2} and Lemma \ref{lem23},
\begin{align*}
\|v_{\delta}\|_{L^{\infty,\lambda}_{T}L^2}
&\le \|\delta e^{+}_{\lambda^*}\chi\|
_{L^{\infty,\lambda}_{T}L^2}
+C(\|h_1(v)\|_{L^{q_1',\mu}_{T}L^{r_1'}}
+\|h_2(v)\|_{L^{q_2',\mu}_{T}L^{r_2'}})  \\
&\le \delta e^{\Re \lambda^* T}\|\chi\|_{L^2}
+C(\|v_{\delta}\|_{X_{T}}^2
+\|v_{\delta}\|_{X_{T}}^{r_1-1}
+\|v_{\delta}\|_{X_{T}}^{r_2-1}).
\end{align*}
Moreover, by \eqref{eq:n3}, Proposition \ref{prop2}
and Lemma \ref{lem24},
\begin{align*}
&\|\partial_tv_{\delta}\|_{L^{\infty,\lambda}_TL^2}
+\|\partial_tv_{\delta}\|_{L^{q_1,\lambda}_TL^{r_1}}
+\|\partial_tv_{\delta}\|_{L^{q_2,\lambda}_TL^{r_2}} \\
&\le C\left(\delta e^{\Re \lambda^*T}\|\chi\|_{H^2}
+e^{\nu T}\|h(\delta \Re \chi)\|_{L^2}
+\|v_{\delta}\|_{X_{T}}^2
+\|v_{\delta}\|_{X_{T}}^{r_1-1}
+\|v_{\delta}\|_{X_{T}}^{r_2-1}\right).
\end{align*}
Here, by Lemma \ref{lem21} and by \eqref{alp} and \eqref{lam},
\begin{align*}
e^{\nu T}\|h(\delta \Re \chi)\|_{L^2}
&\le Ce^{\nu T} (\delta^2\|\chi\|_{H^2}^2
+\delta^{r_1-1}\|\chi\|_{H^2}^{r_1-1}
+\delta^{r_2-1}\|\chi\|_{H^2}^{r_2-1}) \\
&\le C(\delta e^{\Re \lambda^*T})^{1+\alpha}.
\end{align*}
Therefore, by Lemma \ref{lem22} and \eqref{td},
\begin{equation}\label{be2}
\|v_{\delta}\|_{X_{T}}
\le C\left(\varepsilon_0+\varepsilon_0^{1+\alpha}
+\|v_{\delta}\|_{X_{T}}^2
+\|v_{\delta}\|_{X_{T}}^{r_1-1}
+\|v_{\delta}\|_{X_{T}}^{r_2-1}\right)
\end{equation}
for all $T\in (0,T_{\delta}]$.
Since $\limsup_{T\to +0}\|v_{\delta}\|_{X_T}\le C\delta$
and $\|v_{\delta}\|_{X_{T}}$ is continuous in $T$,
by \eqref{be2} we see that
there exist constants $C_1$ and $\varepsilon_0$
independent of $\delta$ such that \eqref{be1}
holds for small $\delta$.
Next, by \eqref{eq:n2}, \eqref{be1},
Proposition \ref{prop2} and Lemma \ref{lem23},
\begin{align}
&\|v_{\delta}(T_{\delta})-\delta \Re (e^{\lambda^*T_{\delta}}\chi)\|_{L^2}
\le C(\|h_1(v)\|_{L^{q_1',\mu}_{T_{\delta}}L^{r_1'}}
+\|h_2(v)\|_{L^{q_2',\mu}_{T_{\delta}}L^{r_2'}})
\nonumber \\
&\le C(\|v_{\delta}\|_{X_{T_{\delta}}}^2
+\|v_{\delta}\|_{X_{T_{\delta}}}^{r_1-1}
+\|v_{\delta}\|_{X_{T_{\delta}}}^{r_2-1})
\le C\varepsilon_0^{1+\alpha}.
\label{be3}
\end{align}
Let $(\Re \chi)^{\perp}$ be the projection of $\Re \chi$
onto the orthogonal complement of
$\Span \{i\varphi,\nabla \varphi\}$ in $L^2(\R^N,\R)^2$.
Note that we identify
$i\varphi=(0,\varphi)$ and $\varphi=(\varphi,0)$.
Since $\Span \{i\varphi,\nabla \varphi\}\subset \ker A$
and $\Re \chi \not\in \ker A$,
we see that $(\Re \chi)^{\perp}\ne 0$.
By \eqref{td} and \eqref{be3}, we have
\begin{align*}
&\left|(v_{\delta}(T_{\delta}),(\Re \chi)^{\perp})_{L^2}
-\delta e^{\Re \lambda^*T_{\delta}}
\|(\Re \chi)^{\perp}\|_{L^2}^2\right| \\
&=|(v_{\delta}(T_{\delta})-\delta \Re (e^{\lambda^*T_{\delta}}\chi),
(\Re \chi)^{\perp})_{L^2}|
\le C\varepsilon_0^{1+\alpha}\|(\Re \chi)^{\perp}\|_{L^2},
\end{align*}
and we can take a small $\varepsilon_0$ such that
\begin{equation}\label{be4}
(v_{\delta}(T_{\delta}),(\Re \chi)^{\perp})_{L^2}
\ge \frac{\varepsilon_0}{2k}\|(\Re \chi)^{\perp}\|_{L^2}^2.
\end{equation}
Finally, we put
$$\Theta_{\delta}=\inf_{(\theta,y)\in \R\times \R^N}
\|u_{\delta}(T_{\delta})-e^{i\theta}\varphi(\cdot+y)\|_{L^2}.$$
Then, by \eqref{cv},
$\Theta_{\delta}=\inf_{(\theta,y)\in \R\times \R^N}
\|v_{\delta}(T_{\delta})+\varphi-e^{i\theta}\varphi(\cdot+y)\|_{L^2}$,
and there exists
$(\theta_{\delta},y_{\delta})\in \R\times \R^N$ such that
$\Theta_{\delta}=\|v_{\delta}(T_{\delta})+\varphi
-e^{i\theta_{\delta}}\varphi(\cdot+y_{\delta})\|_{L^2}$.
Moreover, since $\Theta_{\delta}\le \|v_{\delta}(T_{\delta})\|_{L^2}
\le C_1\varepsilon_0$,
we have $\|\varphi-e^{i\theta_{\delta}}\varphi(\cdot+y_{\delta})\|_{L^2}
\le 2C_1\varepsilon_0$.
Thus, $|(\theta_{\delta},y_{\delta})|=O(\varepsilon_0)$ and
$$e^{i\theta_{\delta}}\varphi(\cdot+y_{\delta})-\varphi
=i\theta_{\delta}\varphi+y_{\delta}\cdot \nabla \varphi+o(\varepsilon_0),$$
which together with \eqref{be4} implies that
\begin{align*}
&(v_{\delta}(T_{\delta})+\varphi
-e^{i\theta_{\delta}}\varphi(\cdot+y_{\delta}),
(\Re \chi)^{\perp})_{L^2} \\
&=(v_{\delta}(T_{\delta}), (\Re \chi)^{\perp})_{L^2}
-(i\theta_{\delta}\varphi+y_{\delta}\cdot \nabla \varphi,
(\Re \chi)^{\perp})_{L^2}-o(\varepsilon_0) \\
&\ge \frac{\varepsilon_0}{4k}\|(\Re \chi)^{\perp}\|_{L^2}^2
\end{align*}
for some small $\varepsilon_0$. Therefore,
$$\inf_{(\theta,y)\in \R\times \R^N}
\|u_{\delta}(T_{\delta})-e^{i\theta}\varphi(\cdot+y)\|_{H^1}
\ge \Theta_{\delta}\ge \frac{\varepsilon_0}{4k}
\|(\Re \chi)^{\perp}\|_{L^2}$$
for all $\delta$ small.
This contradiction proves that
$e^{i\omega t}\varphi$ is orbitally unstable.
\end{proof}

\section{Remark on spectral mapping theorem}\label{sect:SMT}

In this section, we assume that $V_{jk}\in C(\R^N,\R)$ and
there exist positive constants $\varepsilon$ and $C$ such that
\begin{equation}\label{exp-decay}
|V_{jk}(x)|\le C e^{-2\varepsilon |x|}
\end{equation}
for all $x\in \R^N$ and $j,k=1,2$.
We consider the linear operator $A=A_0+V$ defined by \eqref{AV}.
Then we have the following.

\begin{prop}\label{prop:SMT}
For each $N\ge 1$ one has $\sigma(e^{A})=e^{\sigma (A)}$.
\end{prop}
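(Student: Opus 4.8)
The plan is to establish the spectral mapping theorem $\sigma(e^A) = e^{\sigma(A)}$ by proving the nontrivial inclusion $\sigma(e^A) \setminus \{0\} \subset e^{\sigma(A)}$, the reverse inclusion and the handling of $0$ being standard for generators of strongly continuous groups. The classical obstruction to spectral mapping for $C_0$-semigroups lies in the \emph{residual} and \emph{continuous} parts of the spectrum, so the heart of the matter is to show that every point of the \emph{peripheral-type} approximate spectrum of $e^A$ on any circle $\{|z| = e^{\gamma}\}$ is the image under $e^\bullet$ of a point of $\sigma(A)$. The standard route, which is exactly the one taken in \cite{GJLS}, is the Gearhart--Pr\"uss type characterization: for a $C_0$-group $e^{tA}$ on a Hilbert space, $e^{\gamma + i\tau} \in \rho(e^A)$ if and only if $\sup_{k \in \mathbb{Z}} \|(A - (\gamma + i\tau + 2\pi i k))^{-1}\|_{B(L^2)} < \infty$. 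Thus the whole proposition reduces to a \emph{uniform resolvent bound} along vertical lines for the operator $A = A_0 + V$.

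First I would reduce to the free operator $A_0 = JH_0$, for which everything is explicit. Since $H_0 = -\Delta + \omega$ acts diagonally, $A_0 = J(-\Delta + \omega)$ is, after the Fourier transform, multiplication by $J(|\xi|^2 + \omega)$, whose spectrum is the imaginary axis $i\left((-\infty,-\omega] \cup [\omega,\infty)\right)$ doubled (the two eigenvalues of $J$ times $(|\xi|^2+\omega)$ are $\pm i(|\xi|^2 + \omega)$); in particular $\sigma(A_0) \subset i\mathbb{R}$ and one gets directly $\sup_k \|(A_0 - z - 2\pi i k)^{-1}\| < \infty$ for every $z$ with $\operatorname{Re} z \neq 0$, because the resolvent symbol $(J(|\xi|^2+\omega) - z - 2\pi i k)^{-1}$ is bounded uniformly in $\xi$ and in $k$ when $\operatorname{Re} z$ is bounded away from $0$ and $|{\operatorname{Im}} z|$ ranges over a fixed bounded set shifted by $2\pi k$ — here one uses that $|{\operatorname{Im}} z + 2\pi k| \to \infty$ forces $|J(|\xi|^2+\omega) \mp i(\operatorname{Im} z + 2\pi k)|$ to be large. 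So $A_0$ satisfies the spectral mapping theorem, and $\sigma(e^{A_0}) = e^{\sigma(A_0)} \subset \{|z|=1\}$.

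Next I would treat the perturbation $V$ by a \emph{relative compactness / Fredholm} argument rather than a smallness argument, since $V$ is not small. The exponential decay \eqref{exp-decay} makes $V(H_0 + 1)^{-1}$ a compact operator on $L^2$ (it is a product of the $L^\infty$-decaying multiplier $V$ with a convolution-type operator smoothing in the relevant way; concretely $\langle x\rangle^{M} V \in L^\infty$ for all $M$ and $\langle x\rangle^{-M}(H_0+1)^{-1}$ is compact for $M$ large in dimension $N$ via the standard Rellich-type criterion $f(x)g(-i\nabla)$ compact when $f, g \in L^\infty$ vanish at infinity, applied after one more resolvent smoothing). Hence $V$ is $A_0$-compact, so by Weyl's theorem the essential spectrum is unchanged, $\sigma_{\mathrm{ess}}(A) \subset i\mathbb{R}$, and $\sigma(A)$ in $\{\operatorname{Re} z \neq 0\}$ consists of finitely many eigenvalues of finite multiplicity in any vertical strip (in fact in each half-plane, as quoted from Theorem 5.8 of \cite{GSS2}). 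Now fix $\gamma \neq 0$ with $e^{\gamma + i\tau} \notin e^{\sigma(A)}$; equivalently the vertical line $\operatorname{Re} z = \gamma$ shifted by $2\pi i \mathbb{Z}$ misses $\sigma(A)$. I must show $\sup_k \|(A - (\gamma + i\tau) - 2\pi i k)^{-1}\| < \infty$. Write $z_k = \gamma + i\tau + 2\pi i k$ and use the resolvent identity $(A - z_k)^{-1} = (A_0 - z_k)^{-1}\left(I - V(A_0 - z_k)^{-1}\right)^{-1}$ on the range where the Neumann inverse makes sense, together with the analytic-Fredholm structure of $\left(I - V(A_0-z_k)^{-1}\right)^{-1}$: the family $V(A_0 - z)^{-1}$ is norm-continuous and compact for $\operatorname{Re} z = \gamma$, it tends to $0$ in norm as $|{\operatorname{Im}} z| \to \infty$ along the line (because $\|(A_0 - z)^{-1}\| \to 0$ there, from the explicit symbol bound above), so $\left(I - V(A_0-z)^{-1}\right)^{-1}$ exists and is uniformly bounded for $|{\operatorname{Im}} z|$ large, and on the remaining compact part of the line it is a continuous $B(L^2)$-valued function on the set where $z \notin \sigma(A)$ — which includes all $z_k$ by assumption — hence uniformly bounded there too; the periodicity modulo $2\pi i$ then packages all the $z_k$ into one bounded family. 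This yields the uniform resolvent bound, hence $e^{\gamma + i\tau} \in \rho(e^A)$, which is the contrapositive of $\sigma(e^A) \setminus\{0\} \subset e^{\sigma(A)}$.

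The main obstacle I expect is not any single estimate but assembling the Gearhart--Pr\"uss machinery \emph{uniformly in the $2\pi i\mathbb{Z}$-translates} while keeping track of the fact that $A$ is not self-adjoint and not sectorial: one really needs the Hilbert-space version of the Gearhart--Greiner--Pr\"uss theorem (valid because we are on $L^2$), and one needs the decay $\|(A_0 - z)^{-1}\|_{B(L^2)} \to 0$ as $|\operatorname{Im} z| \to \infty$ along vertical lines, which for $A_0 = JH_0$ is where the specific $2\times 2$ matrix structure of $J$ enters — this is precisely the point where the ``modification of \cite{GJLS} when $\varphi$ is not real-valued'' lives, since for real $\varphi$ the operator $H$ is a direct-sum of two scalar Schr\"odinger operators and \cite{GJLS} can quote scalar resolvent bounds, whereas here $V$ genuinely couples the two components and one must run the Fredholm/resolvent argument on the coupled $2\times 2$ system directly. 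Everything else (the inclusion $e^{\sigma(A)} \subset \sigma(e^A)$, which holds for any $C_0$-semigroup, and the treatment of $z=0$, which is excluded since $e^{tA}$ is a group so $0 \in \rho(e^A)$) is routine.
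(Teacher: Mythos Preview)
Your overall strategy is the right one and matches the paper (and \cite{GJLS}): reduce to the Gearhart--Pr\"uss criterion, then show the perturbed resolvent $(A-z)^{-1}$ is uniformly bounded along the vertical sequence $z_k=\gamma+i\tau+2\pi ik$ via an analytic Fredholm argument for $I-V(A_0-z)^{-1}$. The gap is the step where you claim $\|V(A_0-z)^{-1}\|\to 0$ ``because $\|(A_0-z)^{-1}\|\to 0$''. That premise is false. Since $A_0=JH_0$ is normal with spectrum $\{\pm i(|\xi|^2+\omega):\xi\in\R^N\}\supset i\R\setminus(-i\omega,i\omega)$, for $z=\gamma+i\tau$ with $|\tau|\ge\omega$ one has $\mathrm{dist}(z,\sigma(A_0))=|\gamma|$ and hence $\|(A_0-z)^{-1}\|=1/|\gamma|$, independent of $\tau$. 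Your earlier parenthetical (``$|\mathrm{Im}\,z+2\pi k|\to\infty$ forces $|J(|\xi|^2+\omega)\mp i(\mathrm{Im}\,z+2\pi k)|$ to be large'') makes the same error: it is not uniform in $\xi$.

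What actually drives the argument is a \emph{weighted} resolvent decay: one factors $V=WB$ with $W=e^{\varepsilon|x|}V$ and $B=e^{-\varepsilon|x|}I$ (both bounded thanks to \eqref{exp-decay}), writes $\xi-A=L(\xi)[I-L(\xi)^{-1}WB]$ with $L(\xi)=\xi-A_0$, and shows $\|BL(\xi)^{-1}W\|\to 0$ as $|\mathrm{Im}\,\xi|\to\infty$ along $\mathrm{Re}\,\xi=\gamma$. Each entry of $BL(\xi)^{-1}W$ has the form $P(x)\,\xi[\xi^2+D^2]^{-1}Q(x)$ or $P(x)\,D[\xi^2+D^2]^{-1}Q(x)$ with $P,Q$ exponentially decaying, and it is precisely these \emph{sandwiched} operators that decay --- this is a limiting-absorption type estimate and is the content of Lemma~6 of \cite{GJLS}, which the paper invokes. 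The Birman--Schwinger reduction then gives invertibility of $I-L(\xi)^{-1}V$ from that of $I-BL(\xi)^{-1}W$, and the rest of your Fredholm/Gearhart--Pr\"uss outline goes through. So the fix is not cosmetic: the nontrivial analytic input is exactly the two-sided weighted bound, and your proposal as written skips it.
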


In \cite{GJLS}, Proposition \ref{prop:SMT}
is proved for the case $V_{11}=V_{22}=0$.
We modify the proof of Theorem 1 of \cite{GJLS}
to prove Proposition \ref{prop:SMT} for general case.
As we have stated in Section \ref{sect:intro},
this generalization is needed to treat the case where
a solution $\varphi$ of \eqref{sp} is not real-valued.

\begin{proof}[Proof of Proposition \ref{prop:SMT}]
For $\xi=a+i\tau$ with $a$, $\tau\in \R\setminus\{0\}$, we denote
$$L(\xi)=\left[\begin{array}{cc}
\xi & -D \\
D & \xi
\end{array}\right], \quad
D=-\Delta +\omega.$$
Then, we have $-\xi^2 \notin \sigma (D^2)$ and
$$L(\xi)^{-1}
=\left[\begin{array}{cc}
\xi [\xi^2+D^2]^{-1} & D[\xi^2+D^2]^{-1} \\
-D[\xi^2+D^2]^{-1} & \xi [\xi^2+D^2]^{-1}
\end{array}\right].$$
We also have
$\xi-A=L(\xi)-V=L(\xi)[I-L(\xi)^{-1}V]$.
Here we decompose $V=WB$ by
$$W=e^{\varepsilon |x|}V, \quad B=e^{-\varepsilon |x|}I.$$
By \eqref{exp-decay}, all entries of $W$ and $B$ are
exponentially decaying continuous functions.
Moreover, each entry of $BL(\xi)^{-1}W$ has a form
$$P_1(x)\xi [\xi^2+D^2]^{-1}Q_1(x)
+P_2(x)D [\xi^2+D^2]^{-1}Q_2(x),$$
where $P_1$, $P_2$, $Q_1$ and $Q_2$ are real-valued
continuous functions decaying exponentially.
Therefore, by Lemma 6 of \cite{GJLS}, we see that
$\|BL(\xi)^{-1}W\|\to 0$ as $|\tau|\to \infty$.
Then the rest of the proof of Proposition \ref{prop:SMT}
is the same as in the proof of Theorem 1 of \cite{GJLS}.
\end{proof}

\vspace{3mm}

\textbf{Acknowledgements.}
This work started when V.G. visited Saitama University
by the RIMS International Project Research 2009
\lq\lq Qualitative study on nonlinear partial differential equations
of dispersive type".
Part of this work was done while M.O. visited Universit\'e Bordeaux 1
by the JSPS Excellent Young Researchers Overseas Visit Program.
M.O. would like to thank Mathieu Colin and Thierry Colin
for their hospitality. V.G. was supported by the Italian National Council of Scientific Research (project PRIN No.
2008BLM8BB )
entitled: "Analisi nello spazio delle fasi per E.D.P."

\end{document}